\documentclass[10pt,reqno]{amsart}
\usepackage{amsmath,multicol}
\usepackage{amsthm}
\usepackage{amssymb}
\usepackage{amsfonts}
\usepackage{subcaption}
\usepackage{graphicx}
\usepackage{latexsym}
\usepackage{cite, url, xcolor}
\usepackage{stmaryrd}
\usepackage{hyperref}
\usepackage{enumitem}
    \setenumerate{itemsep=5pt} 
    \setitemize{itemsep=5pt} 

\usepackage{mathtools}
\makeatletter
\DeclareRobustCommand\widecheck[1]{{\mathpalette\@widecheck{#1}}}
\def\@widecheck#1#2{%
    \setbox\z@\hbox{\m@th$#1#2$}%
    \setbox\tw@\hbox{\m@th$#1%
       \widehat{%
          \vrule\@width\z@\@height\ht\z@
          \vrule\@height\z@\@width\wd\z@}$}%
    \dp\tw@-\ht\z@
    \@tempdima\ht\z@ \advance\@tempdima2\ht\tw@ \divide\@tempdima\thr@@
    \setbox\tw@\hbox{%
       \raise\@tempdima\hbox{\scalebox{1}[-1]{\lower\@tempdima\box
\tw@}}}%
    {\ooalign{\box\tw@ \cr \box\z@}}}
\makeatother    
    
\usepackage[font=Large]{caption}

\usepackage[font=footnotesize,labelfont=bf]{caption}

\usepackage[sc]{mathpazo}
\linespread{1.05}
\usepackage[T1]{fontenc}

\usepackage{tikz,amsmath,amsfonts}
\usetikzlibrary{positioning}
\usetikzlibrary{decorations.pathreplacing}

\graphicspath{{IMAGES/}}
\newcommand{\NN}{\mathbb{N}}
\newcommand{\ZZ}{\mathbb{Z}}
\newcommand{\RR}{\mathbb{R}}
\newcommand{\CC}{\mathbb{C}}

\newcommand{\Le}{\mathsf{L}}

\newcommand{\QQ}{\mathbb{Q}}
\newcommand{\lcm}{\operatorname{lcm}}
\renewcommand{\vec}[1]{{\bf #1}}
\providecommand{\multi}[1]{\llbracket #1 \rrbracket}

\renewcommand{\pmod}[1]{\,(\operatorname{mod} #1)}
\renewcommand{\phi}{\varphi}

\newcommand{\stirling}[2]{\genfrac\{\}{0pt}{}{#1}{#2}}

\renewcommand\>{\rangle}
\newcommand\<{\langle}

\newcommand{\Mean}{\mathsf{Mean}\,}
\newcommand{\Median}{\mathsf{Median}\,}
\newcommand{\Mode}{\mathsf{Mode}\,}
\newcommand{\Var}{\mathsf{Var}\,}
\newcommand{\StDev}{\mathsf{StDev}\,}
\newcommand{\HarMean}{\mathsf{HarMean}\,}
\newcommand{\GeoMean}{\mathsf{GeoMean}\,}
\newcommand{\Skew}{\mathsf{Skew}\,}
\renewcommand{\min}{\mathsf{Min}\,}
\renewcommand{\max}{\mathsf{Max}\,}



\theoremstyle{plain}
\newtheorem{Theorem}[equation]{Theorem}
\newtheorem{Lemma}[equation]{Lemma}

\newtheorem{Corollary}[equation]{Corollary}
\theoremstyle{definition}

\newtheorem{Example}[equation]{Example}

\allowdisplaybreaks
\begin{document}
\title[Factorization length distribution for affine semigroups II]{Factorization length distribution for affine semigroups~II:\ asymptotic behavior for numerical semigroups with arbitrarily many generators}

\author[S.~Garcia]{Stephan Ramon Garcia}
\address{Department of Mathematics, Pomona College, 610 N. College Ave., Claremont, CA 91711} 
\email{stephan.garcia@pomona.edu}
\urladdr{\url{http://pages.pomona.edu/~sg064747}}

\author[M.~Omar]{Mohamed Omar}
\address{Department of Mathematics, Harvey Mudd College, 301 Platt Blvd., Claremont, CA 91711}
\email{omar@g.hmc.edu}
\urladdr{www.math.hmc.edu/~omar}
\author[C.~O'Neill]{Christopher O'Neill}
\address{Mathematics Department, San Diego State University, San Diego, CA 92182}
\email{cdoneill@sdsu.edu}
\urladdr{https://cdoneill.sdsu.edu/}

\author[S.~Yih]{Samuel Yih}
\address{UCLA Mathematics Department, Box 951555, Los Angeles, CA 90095}
\email{samyih@math.ucla.edu}
\urladdr{https://www.math.ucla.edu/people/grad/samyih}

\thanks{First author was partially supported by NSF grant DMS-1800123.}

\begin{abstract}
For numerical semigroups with a specified list of (not necessarily minimal) generators, we obtain explicit asymptotic expressions, and in some cases quasipolynomial/quasirational representations, for all major factorization length statistics.  This involves a variety of tools that are not standard in the subject, such as algebraic combinatorics (Schur polynomials), probability theory (weak convergence of measures, characteristic functions), and harmonic analysis (Fourier transforms of distributions).  We provide instructive examples which demonstrate the power and generality of our techniques.  We also highlight unexpected consequences in the theory of homogeneous symmetric functions.
\end{abstract}

\subjclass[2010]{20M14, 05E05}

\keywords{numerical semigroup; monoid; factorization; quasipolynomial; quasirational function; mean; median; mode; variance; standard deviation; skewness; Egyptian fraction; symmetric function; homogeneous symmetric function}
\maketitle

\section{Introduction}
\label{sec:intro}

In what follows, $\NN = \{0,1,2,\ldots\}$ denotes the set of nonnegative integers.  
A~\emph{numerical semigroup} $S \subset \NN$ is an additive subsemigroup containing $0$.  We write 
\begin{equation*}
S = \<n_1, n_2, \ldots, n_k\> = \{a_1n_1 + a_2 n_2 + \cdots + a_kn_k \,:\, a_1,a_2, \ldots, a_k \in \NN\}
\end{equation*}
for the numerical semigroup generated by distinct positive
$n_1 < \cdots < n_k$
in $\NN$.  
Each numerical semigroup $S$ admits a finite generating set.  Moreover,
there is a unique generating set that is minimal with respect to containment~\cite{NSBook}. 
We always assume $S$ has finite complement in $\NN$ or, equivalently, 
$\gcd(n_1,n_2, \ldots, n_k) = 1$, and that the
generators $n_1,n_2,\ldots,n_k$ are listed in increasing order.
We do not assume that $n_1,n_2, \ldots, n_k$ minimally generate $S$.  

A \emph{factorization} of $n \in S$ is an expression
\begin{equation*}
n = a_1n_1 + a_2 n_2 + \cdots + a_kn_k
\end{equation*}
of $n$ as a sum of generators of $S$, which we represent here using the $k$-tuple 
$\vec{a} = (a_1, a_2, \ldots, a_k) \in \NN^k$.  
The \emph{length} of the factorization $\vec{a}$ is 
\begin{equation*}
\|\vec{a}\| = a_1 + a_2 + \cdots + a_k.
\end{equation*}
The \emph{length multiset of $n$}, denoted $\Le\multi{n}$, is the multiset 
with a copy of $\|\vec{a}\|$ for each factorization $\vec{a}$ of $n$.  Recall that a \emph{multiset} 
is a set in which repetition is taken into account; that is, its elements can occur multiple times.  In particular, the cardinality 
$|\Le \multi{n}|$ of $\Le\multi{n}$ equals the number of factorizations of $n$.

It is well known that all sufficiently large $n \in \NN$ belong to $S$ when the generators are relatively prime.  The largest integer that does not belong to $S$, called its \emph{Frobenius number}, has been studied extensively in the literature~\cite{diophantine}.  As an extension of this, the $s$-Frobenius numbers (i.e., the largest integer with at most $s$ factorizations) has also been studied~\cite{fukshansky2011bounds}, as has an analogous question for rings of integers~\cite{gao2011quantitative}.  More recently, J.~Bourgain and Ya.~G.~Sinai~\cite{bourgainsinai}, among others~\cite{alievhenkaicke,RNS}, investigated the asymptotic behavior of the Frobenius number, as did V.I.~Arnold~\cite{arnold} in the context of estimating the number of factorizations of elements of $S$.  

Factorizations and their lengths have been studied extensively under the broad umbrella of factorization theory~\cite{schmid2009characterization,geroldinger2006non,chapman2000half} (see \cite{nonuniq} for a thorough introduction).  Investigations usually concern sets of lengths (i.e., without repetition), including asymptotic structure theorems \cite{GH92,structurethm,narkiewiczconjecture,gao2000systems} as well as specialized results spanning numerous families of rings and semigroups from number theory~\cite{integervaluedpolys,factoralgebraicintegers,acmfirst}, algebra~\cite{modulessurvey,noncommutativefactor} and elsewhere (see the survey~\cite{setsoflengthmonthly} and the references therein).  
Several combinatorially-flavored invariants have also been studied (e.g., elasticity~\cite{elasticitysurvey,geroldinger2018longelasticity}, the delta set~\cite{krulldeltaset,chapman2014delta}, and the catenary degree~\cite{geroldinger1997chains,geroldinger2011catenary}) to obtain more refined comparisons of length sets across different settings~\cite{chapman2017krull}.  Numerical semigroups have received particular attention~\cite{numericalrealization,numericalfactorsurvey,blanco2011semigroup}, in part due to their suitability for computation~\cite{numericalsgpsgap,computationoverview} and the availability of machinery from combinatorial commutative algebra~\cite{semigroupalgebra,factorhilbert} (see~\cite{CCA} for background on the latter).  Additionally, factorizations of numerical semigroup elements arise naturally in discrete optimization as solutions to knapsack problems~\cite{pisinger1998knapsack,de2013algebraic} as well as in algebraic geometry and commutative algebra~\cite{abhyankar1967local,barucci1997maximality}.  

One of the crowning achievements in factorization theory is the \emph{structure theorem for sets of length}, which in this setting states that for any numerical semigroup $S$, there exist constants $d, M > 0$ such that for all sufficiently large elements $n \in S$, the length set $L(n)$ is an arithmetic sequence from which some subset of the first and last $M$ elements are removed~\cite{nonuniq}.  As a consequence, most invariants derived from factorization length focus on extremal lengths.


We consider here asymptotic questions surrounding length multisets of numerical semigroups.  This question was initially studied in~\cite{lengthdistribution1} for three-generated numerical semigroups, where a closed form for the limiting distribution was obtained via careful combinatorial arguments for bounding factorization-length multiplicities.  This approach proved difficult, if not impossible, when four or more generators are allowed and \cite{lengthdistribution1} ended with many questions unanswered.

Theorem~\ref{Theorem:Main} below, our main result, answers almost all questions about the asymptotic properties of important statistical quantities associated to factorization lengths
in numerical semigroups.  It relates asymptotic questions about factorization lengths to properties of an explicit
probability distribution, which permits us to obtain numerous asymptotic predictions in closed form.
Our theorem recovers the key results from \cite{lengthdistribution1} on three-generated semigroups, and generalizes them
to semigroups with an arbitrary number of generators. 

For what follows, we require some algebraic terminology.
The \emph{complete homogeneous symmetric polynomial} of degree $p$ in the
$k$ variables $x_1, x_2, \ldots, x_k$ is
\begin{equation*}
h_p(x_1,x_2,\ldots,x_k) \quad=\!\! 
\sum_{1 \leq \alpha_1 \leq \cdots \leq \alpha_{p} \leq k} x_{\alpha_1} x_{\alpha_2}\cdots x_{\alpha_p},
\end{equation*}
the sum of all degree $p$ monomials in $x_1,x_2,\ldots,x_k$.  A \emph{quasipolynomial} of degree $d$ is a function $f:\ZZ\to\CC$ of the form
\begin{equation*}
f(n) = c_d(n) n^d + c_{d-1}(n) n^{d-1} + \cdots + c_1(n) n + c_0(n),
\end{equation*}
in which the coefficients $c_1(n), c_2(n),\ldots, c_d(n)$
are periodic functions of $n$ \cite{continuousdiscretely}.  A~\emph{quasirational function} is a quotient of two quasipolynomials.
The cardinality of a set $X$ is denoted $|X|$.

\begin{Theorem}\label{Theorem:Main}
Let $S= \<n_1,n_2,\ldots,n_k\>$, in which $k\geq 3$, $\gcd(n_1,n_2,\ldots,n_k)=1$, and $n_1 < n_2 < \cdots < n_k$.
\begin{enumerate}
\item For real $\alpha < \beta$,
\begin{equation*}
\lim_{n\to\infty} \frac{ |\{ \ell \in \Le\multi{n} : \ell \in[\alpha n, \beta n]\} |}{ | \Le\multi{n} |}
 \,\,=\,\, \int_{\alpha}^{\beta} F(t)\,dt,
\end{equation*}
where $F:\RR\to\RR$ is the probability density function 
\begin{equation*}
F(x) :=\frac{(k-1)n_1n_2\cdots n_k}{2}\sum_{r=1}^k \frac{|1-n_rx|(1-n_r x)^{k-3}}{\prod_{j\neq r}(n_j-n_r)}.
\end{equation*}
The support of $F$ is $\big[\frac{1}{n_k}, \frac{1}{n_1}\big]$.  

\item For $p \in \NN$, the $p$th moment of $F$ is
\begin{equation*}
\int_0^1 t^p F(t)\,dt \,\,=\,\, \binom{p+k-1}{p}^{-1} h_p\left( \frac{1}{n_1}, \frac{1}{n_2} ,\ldots, \frac{1}{n_k} \right).
\end{equation*}

\item For any continuous function $g:(0,1)\to\CC$,
\begin{equation*}
\lim_{n\to\infty}  \frac{1}{ |\Le\multi{n}|} \sum_{\ell \in \Le\multi{n}} g\bigg(\frac{\ell}{n}\bigg) \,\,=\,\, \int_0^1 g(t) F(t)\,dt.
\end{equation*}
\end{enumerate}
\end{Theorem}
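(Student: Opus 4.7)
The plan is to recognize $F$ as the probability density of the random variable
\begin{equation*}
X \,=\, \frac{U_1}{n_1} + \cdots + \frac{U_k}{n_k},
\end{equation*}
where $(U_1,\ldots,U_k)$ is uniformly distributed on the standard simplex $\Delta^{k-1} = \{u \in \RR_{\geq 0}^k : u_1 + \cdots + u_k = 1\}$ (equivalently, Dirichlet$(1,\ldots,1)$). Heuristically, a uniformly random factorization $\vec a$ of large $n$, rescaled by $1/n$, is approximately uniform on the rational simplex $T = \{b \in \RR_{\geq 0}^k : \sum_i b_i n_i = 1\}$; the change of variables $u_i = b_i n_i$ sends uniform measure on $T$ to uniform measure on $\Delta^{k-1}$ and carries $\|\vec a\|/n = \sum_i b_i$ to $X$. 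I would prove the three parts in the order (b), (c), (a).

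For part (b), I would compute $\mathbb{E}[X^p]$ by expanding $X^p$ with the multinomial theorem and applying the Dirichlet moment identity $\mathbb{E}[U_1^{m_1}\cdots U_k^{m_k}] = (k-1)!\,m_1!\cdots m_k!/(p+k-1)!$ for $m_1 + \cdots + m_k = p$. This collapses to
\begin{equation*}
\mathbb{E}[X^p] \,=\, \frac{p!(k-1)!}{(p+k-1)!} \sum_{m_1+\cdots+m_k = p} \prod_{i=1}^k n_i^{-m_i} \,=\, \binom{p+k-1}{p}^{-1} h_p\!\left(\tfrac{1}{n_1},\ldots,\tfrac{1}{n_k}\right).
\end{equation*}
To close the argument I would verify that $F$ really is the density of $X$: compute the characteristic function $\mathbb{E}[e^{i\xi X}] = (k-1)!\prod_i (e^{i\xi/n_i}-1)/(i\xi/n_i)$, Fourier-invert via partial fractions to get a linear combination of truncated power functions $(1/n_r - x)_+^{k-2}$, and match the resulting B-spline to the stated formula for $F$ by handling the sign changes of $1 - n_r x$ on $[1/n_k, 1/n_1]$.

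For part (c), I would establish moment convergence via the bivariate generating function $\Phi(t,z) = \prod_{i=1}^k (1 - z t^{n_i})^{-1} = \sum_{n,\ell} N(n,\ell)\,t^n z^\ell$, where $N(n,\ell)$ counts factorizations of $n$ of length $\ell$. Since $\sum_\ell N(n,\ell)\ell^p = [t^n](z\partial_z)^p \Phi(t,z)|_{z=1}$, substituting $z = e^s$ and using $1 - e^s t^{n_i} \sim n_i(1-t) - s$ near $(t,s) = (1,0)$ gives the leading singular behavior
\begin{equation*}
\Phi(t,e^s) \,\sim\, \prod_{i=1}^k \frac{1}{n_i(1-t) - s} \,=\, \frac{1}{n_1\cdots n_k}\sum_{p \geq 0} \frac{h_p(1/n_1,\ldots,1/n_k)\,s^p}{(1-t)^{k+p}},
\end{equation*}
so $(z\partial_z)^p\Phi|_{z=1} \sim p!\,h_p/[n_1\cdots n_k(1-t)^{k+p}]$. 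Because the pole at $t=1$ dominates the other singularities (at roots of unity, with strictly smaller pole order), the standard transfer principle yields $[t^n](z\partial_z)^p \Phi|_{z=1} \sim p!\,h_p\,n^{k+p-1}/[(k+p-1)!\,n_1\cdots n_k]$, and dividing by $n^p |\Le\multi{n}| \sim n^{k+p-1}/[(k-1)!\,n_1\cdots n_k]$ recovers $\mathbb{E}[X^p]$ exactly. Since $F$ and (for $n$ large) the empirical measures $\mu_n$ of $\{\ell/n : \ell \in \Le\multi{n}\}$ are supported in a common compact subset of $(0,1)$, moment convergence implies weak convergence, and Stone--Weierstrass approximation of continuous $g:(0,1) \to \CC$ by polynomials then gives (c). Part (a) follows from (c) by sandwiching $\chi_{[\alpha,\beta]}$ between continuous functions from above and below, using that $F$ has no atoms.

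The main technical obstacle is the singularity analysis in (c): one must justify that the lower-order corrections in the expansion of $\Phi(t,e^s)$ near the diagonal singularity $(1,0)$ contribute only subdominant terms to $[t^n]$ uniformly in $s$, so that $\partial_s^p$ and $[t^n]$ commute with the leading asymptotic. A parallel difficulty is the density identification for $F$: each summand in the theorem's formula is nonzero outside the support $[1/n_k,1/n_1]$, and it is a nontrivial partial-fractions identity --- closely tied to $h_p(y_1,\ldots,y_k) = \sum_r y_r^{p+k-1}/\prod_{j\neq r}(y_r - y_j)$ with $y_r = 1/n_r$ --- that their signed sum vanishes off the support.
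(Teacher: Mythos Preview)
Your overall strategy is sound and overlaps substantially with the paper's: both routes compute the moments of the empirical length measure via the bivariate generating function $\prod_i(1-zt^{n_i})^{-1}$, extract the leading asymptotic from the dominant pole at $t=1$, and pass from moment convergence to weak convergence on a compact interval. The paper carries out the coefficient extraction rigorously as a residue computation (its Theorem~\ref{Theorem:Moment}), arriving at an exact quasipolynomial for $\sum_\ell \ell^p$ rather than invoking a transfer principle; your acknowledged ``main technical obstacle'' is precisely what that computation resolves.

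Where you genuinely diverge---and improve conceptually---is in identifying $F$ as the density of $X=\sum_i U_i/n_i$ with $(U_1,\ldots,U_k)$ uniform on the simplex. The paper instead proves a separate exponential-generating-function identity (Theorem~\ref{Theorem:Combo}) linking $h_p$ to $\sum_r e^{x_r z}/\prod_{j\neq r}(x_r-x_j)$, recognizes this as the characteristic function of the limit measure via L\'evy's continuity theorem, and Fourier-inverts to recover $F$. Your Dirichlet-moment computation of $\mathbb E[X^p]$ is cleaner and immediately explains why $h_p$ appears; the paper mentions the B-spline viewpoint only in passing, deferring it to follow-up work.

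There is, however, one concrete error. Your claimed characteristic function $\mathbb E[e^{i\xi X}] = (k-1)!\prod_i (e^{i\xi/n_i}-1)/(i\xi/n_i)$ is wrong: that product is (up to the prefactor) the characteristic function of a sum of \emph{independent} uniforms on $[0,1/n_i]$, not of a Dirichlet-weighted combination. Already for $k=2$ the two disagree. The correct expression is the divided-difference form
\begin{equation*}
\mathbb E\bigl[e^{i\xi X}\bigr] \,=\, (k-1)! \sum_{r=1}^k \frac{e^{i\xi/n_r}}{\prod_{j\neq r}\bigl(i\xi/n_r - i\xi/n_j\bigr)} \,=\, \frac{(k-1)!}{(i\xi)^{k-1}}\sum_{r=1}^k \frac{e^{i\xi/n_r}}{\prod_{j\neq r}(1/n_r - 1/n_j)},
\end{equation*}
which you can obtain directly from the simplex integral $\int_{\Delta^{k-1}} e^{\sum a_j u_j}\,d\sigma$ or by matching moments against your own (correct) formula for $\mathbb E[X^p]$. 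This is exactly the expression the paper derives and then inverts term-by-term using the distributional identity $\bigl(e^{iat}/t^n\bigr)^{\vee}(x) = |x-a|(x-a)^{n-2}/\bigl(2i^n(n-1)!\bigr)$; once corrected, your Fourier inversion to the truncated-power (B-spline) form of $F$ goes through, and the partial-fractions identity $h_p(y_1,\ldots,y_k)=\sum_r y_r^{p+k-1}/\prod_{j\neq r}(y_r-y_j)$ you mention is indeed the same content as Theorem~\ref{Theorem:Combo}.
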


In Theorem \ref{Theorem:Main}a, observe that $F$ is a piecewise-polynomial function of degree $k-2$ that is $(k-3)$-times continuously differentiable, but not everywhere differentiable $k-2$ times.  In particular, its smoothness increases as the number of generators increases.
This is characteristic of the Curry--Schoenberg B-spline from computer-aided design \cite{Curry}, of which the function $F$ is a special case;
this connection is discussed in much greater detail in \cite{Bottcher}.

The explicit nature and broad generality of Theorem \ref{Theorem:Main} permit strikingly accurate asymptotic predictions, often in closed form, of virtually every statistical quantity
related to factorization lengths when considered with multiplicity.  For example, Theorem \ref{Theorem:Main} immediately predicts the number of factorizations of $n$,
the moments of the factorization-length multiset $\Le\multi{n}$, its mean, standard deviation, median, mode, skewness, and so forth (see Section \ref{Section:Applications}).
The flexibility afforded by Theorem \ref{Theorem:Main}c permits us to address quantities such as the harmonic and geometric mean factorization length,
which would previously have been beyond the scope of standard semigroup-theoretic techniques.  

The proof of Theorem \ref{Theorem:Main} is contained in Section \ref{Section:ProofMain}.
It involves a variety of tools that are not standard fare in the numerical semigroup literature.  
For example, weak convergence of probability measures, Fourier transforms of distributions, and the theory of characteristic functions come into play.  
In addition, Theorem~\ref{Theorem:Main} builds upon two other results, described below, whose origins are in complex variables (Theorem~\ref{Theorem:Moment})
and algebraic combinatorics (Theorem~\ref{Theorem:Combo}).

Theorem~\ref{Theorem:Moment}, whose proof is deferred until Section~\ref{Section:ProofMoment}, 
concerns a quasipolynomial representation for the $p$th power sum of the factorization lengths of $n$ (the main ingredient for the $p$th moment of $F(x)$). 
Although this result is of independent interest to the numerical semigroup community, its true power emerges when
combined with Theorems~\ref{Theorem:Main} and~\ref{Theorem:Combo}.

\begin{Theorem}\label{Theorem:Moment}
Let $S= \<n_1,n_2,\ldots,n_k\>$, in which $k\geq 3$, $\gcd(n_1,n_2,\ldots,n_k)=1$, and $n_1 < n_2 < \cdots < n_k$.
For $p \in \NN$, 
\begin{equation*}
\sum_{\ell \in \Le\multi{n}} \ell^p = \frac{p!}{(k + p - 1)! (n_1 n_2 \cdots n_k)} h_p \bigg( \frac{1}{n_1}, \frac{1}{n_2}, \ldots, \frac{1}{n_k} \bigg) n^{k+p-1} + w_p(n), 
\end{equation*}
in which $w_p(n)$ is a quasipolynomial of degree at most $k+p-2$ whose coefficients have period dividing $\lcm(n_1,n_2,\ldots,n_k)$.  
\end{Theorem}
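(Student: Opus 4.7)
The strategy is to expand the power sum via the multinomial theorem and then analyze the resulting generating functions by partial fractions. First I would observe that $(a_1+\cdots+a_k)^p = \sum_{|\vec{b}|=p}\binom{p}{b_1,\ldots,b_k}\prod_{i} a_i^{b_i}$, so
\begin{equation*}
\sum_{\ell\in\Le\multi{n}}\ell^p \,=\, \sum_{\substack{\vec{b}\in\NN^k\\|\vec{b}|=p}}\binom{p}{b_1,\ldots,b_k}\,N_{\vec{b}}(n),
\qquad\text{where}\qquad N_{\vec{b}}(n) \,:=\, \sum_{\vec{a}\cdot\vec{n}=n}\prod_{i=1}^k a_i^{b_i}.
\end{equation*}
The task thereby reduces to obtaining a quasipolynomial representation for each $N_{\vec{b}}(n)$.

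Next I would use the factorization of the ordinary generating function
\begin{equation*}
\sum_{n\geq 0}N_{\vec{b}}(n)\,x^n \,=\, \prod_{i=1}^k P_{b_i}(x^{n_i}), \qquad P_b(z) \,:=\, \sum_{a\geq 0} a^b z^a \,=\, \frac{A_b(z)}{(1-z)^{b+1}},
\end{equation*}
in which $A_b$ denotes the Eulerian polynomial and $A_b(1)=b!$. Each factor $P_{b_i}(x^{n_i})$ has a pole of order exactly $b_i+1$ at every $n_i$-th root of unity, so every pole of the product lies among the $\lcm(n_1,\ldots,n_k)$-th roots of unity. At $x=1$ the pole order is maximal, $\sum_i(b_i+1)=p+k$; at any other root of unity $\zeta$, the coprimality hypothesis $\gcd(n_1,\ldots,n_k)=1$ forces some index $i$ with $\zeta^{n_i}\neq 1$, so the pole order there is strictly less than $p+k$.

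Combining $1-x^{n_i}=n_i(1-x)\bigl(1+O(1-x)\bigr)$ with $[x^n](1-x)^{-(p+k)} = n^{p+k-1}/(p+k-1)! + O(n^{p+k-2})$, the principal singularity at $x=1$ yields
\begin{equation*}
N_{\vec{b}}(n) \,=\, \frac{\prod_i b_i!}{(p+k-1)!\,\prod_i n_i^{b_i+1}}\,n^{p+k-1} \,+\, v_{\vec{b}}(n),
\end{equation*}
in which $v_{\vec{b}}(n)$ is a quasipolynomial of degree at most $p+k-2$ with period dividing $\lcm(n_1,\ldots,n_k)$: the subleading terms at $x=1$ give a polynomial in $n$ of strictly smaller degree, while each pole at $\zeta\neq 1$ contributes $\zeta^{-n}$ times a polynomial in $n$ of degree strictly below $p+k-1$. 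Multiplying by $\binom{p}{b_1,\ldots,b_k}$ turns $\prod_i b_i!$ into the constant $p!$, and summing over $|\vec{b}|=p$ collapses the remaining product into $h_p(1/n_1,\ldots,1/n_k)/(n_1\cdots n_k)$, producing exactly the claimed leading term. The remainder $w_p(n)=\sum_{\vec{b}}\binom{p}{b_1,\ldots,b_k}v_{\vec{b}}(n)$ inherits the quasipolynomial structure, with the asserted degree and period bounds.

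The main obstacle I anticipate is the partial-fraction bookkeeping: specifically, verifying that every non-principal pole has order \emph{strictly} less than $p+k$, so that the error term is genuinely of degree at most $p+k-2$ rather than $p+k-1$. This is precisely where the coprimality hypothesis is essential; once this is established, the asymptotic extraction at $x=1$ and the collapse of the sum over $\vec{b}$ into the complete homogeneous symmetric polynomial are essentially mechanical.
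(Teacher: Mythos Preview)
Your argument is correct and arrives at the same conclusion, but the route is genuinely different from the paper's.

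The paper works with the two-variable generating function $g(z,w)=\prod_i (1-wz^{n_i})^{-1}$ and applies the operator $(w\,\partial/\partial w)^p$ at $w=1$; it then uses Stirling numbers of the second kind together with an inductive lemma showing that $\partial^p g/\partial w^p$ is $p!$ times $\prod_j(1-wz^{n_j})^{-1}$ multiplied by $h_p\bigl(\tfrac{z^{n_1}}{1-wz^{n_1}},\ldots,\tfrac{z^{n_k}}{1-wz^{n_k}}\bigr)$. In this approach the complete homogeneous symmetric polynomial appears \emph{structurally}, before any residue is taken, and the leading coefficient falls out of a single limit computation at $z=1$.

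You instead expand the length power by the multinomial theorem, factor the resulting generating function as a product of Eulerian-polynomial pieces $A_{b_i}(x^{n_i})/(1-x^{n_i})^{b_i+1}$, and extract the dominant singularity termwise; $h_p$ then emerges only at the end, from the identity $\sum_{|\vec b|=p}\prod_i x_i^{b_i}=h_p(x_1,\ldots,x_k)$ after the multinomial coefficient cancels the $\prod_i b_i!$ coming from $A_{b_i}(1)$. Your approach avoids Stirling numbers and the derivative lemma entirely, trading them for the standard Eulerian identity $\sum_a a^b z^a = A_b(z)/(1-z)^{b+1}$ with $A_b(1)=b!$; it is arguably more elementary, at the cost of carrying a sum over compositions $\vec b$ until the final step. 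Both proofs hinge on the same structural fact---coprimality forces $x=1$ to be the unique pole of maximal order $p+k$---and the obstacle you flag is already dispatched by exactly the argument you gave.
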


Our next result, whose proof is in Section~\ref{Section:Combo},
is an exponential generating function identity.  
Although its derivation involves a bit of algebraic combinatorics and 
the result itself might seem a bit of a digression, 
this identity is a crucial ingredient to the proof of Theorem~\ref{Theorem:Main}.

\begin{Theorem}\label{Theorem:Combo}
Let $x_1,x_2,\ldots,x_k \in \CC\backslash\{0\}$ be distinct.
For $z \in \CC$,
\begin{equation*}
\sum_{p=0}^{\infty} \frac{h_p(x_1,x_2,\ldots,x_k)}{(p+k-1)!} z^{p+k-1}
\,\,=\,\, \sum_{r=1}^k \frac{e^{x_r z}}{\prod_{j\neq r}(x_r-x_j)}.
\end{equation*}
\end{Theorem}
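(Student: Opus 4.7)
The plan is to Taylor-expand the right-hand side at $z=0$ and read off its coefficients. Writing
\begin{equation*}
G(z) := \sum_{r=1}^k \frac{e^{x_r z}}{\prod_{j\neq r}(x_r - x_j)} \,=\, \sum_{m=0}^\infty \frac{\sigma_m}{m!}\, z^m, \qquad \sigma_m := \sum_{r=1}^k \frac{x_r^m}{\prod_{j\neq r}(x_r - x_j)},
\end{equation*}
the theorem reduces to two symbolic identities: $\sigma_m = 0$ for $0 \le m \le k-2$, and $\sigma_{p+k-1} = h_p(x_1,\ldots,x_k)$ for $p \ge 0$. Together they trim the Taylor series of $G$ to start at $m = k-1$ and re-index it into the form claimed on the left. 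Since $G$ is a finite linear combination of exponentials it is entire and both sides have infinite radius of convergence, so the problem becomes purely symbolic and no analytic subtleties arise.

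For the second identity I would start from the classical generating function $\sum_{p \ge 0} h_p(x_1,\ldots,x_k)\, t^p = \prod_j (1 - x_j t)^{-1}$ and apply partial fractions. Multiplying by $1 - x_r t$ and evaluating at $t = 1/x_r$ gives the residue coefficients explicitly, so that
\begin{equation*}
\prod_{j=1}^k \frac{1}{1 - x_j t} \,=\, \sum_{r=1}^k \frac{x_r^{k-1}}{\prod_{j\neq r}(x_r - x_j)} \cdot \frac{1}{1 - x_r t}.
\end{equation*}
Expanding each factor $(1 - x_r t)^{-1}$ as a geometric series and matching coefficients of $t^p$ recovers $\sigma_{p+k-1} = h_p(x_1,\ldots,x_k)$ for every $p \ge 0$.

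For the vanishing of $\sigma_m$ when $m < k-1$, the cleanest route is to recognize $\sigma_m$ as the divided difference $f[x_1, \ldots, x_k]$ of $f(x) = x^m$, equivalently as the coefficient of $x^{k-1}$ in the unique polynomial of degree at most $k-1$ that Lagrange-interpolates $f$ at the $k$ distinct nodes $x_1, \ldots, x_k$. When $\deg f = m < k-1$ that interpolant is simply $f$ itself, which has no $x^{k-1}$ term, so $\sigma_m = 0$.

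Substituting both facts into the Taylor expansion of $G(z)$ and re-indexing $m = p + k - 1$ collapses $G$ to $\sum_{p \ge 0} \frac{h_p(x_1,\ldots,x_k)}{(p+k-1)!}\, z^{p+k-1}$, which is exactly the left-hand side. The main conceptual obstacle is identifying $\sigma_m$ as a divided difference so that vanishing for small $m$ becomes transparent; once this viewpoint is adopted, both claims about $\sigma_m$ reduce to standard facts and the remainder is bookkeeping. Alternatively, one could dispatch the vanishing by contour-integrating $x^m / \prod_j (x - x_j)$ around a large circle and using that the integrand decays like $x^{m-k}$.
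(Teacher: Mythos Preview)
Your argument is correct. Both identities about $\sigma_m$ are standard and you have justified them cleanly: the partial-fraction computation is right (the $x_i$ being nonzero guarantees simple poles at distinct $t=1/x_i$), and the divided-difference interpretation of $\sigma_m$ as the leading coefficient of the Lagrange interpolant is exactly what makes the vanishing for $m<k-1$ immediate. Your remark that entirety of $G$ forces convergence of its Taylor series everywhere is a tidy way to handle the radius of convergence without a separate estimate.

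The paper takes a different route. It multiplies through by the Vandermonde determinant $\det V(x_1,\ldots,x_k)$ and recognizes the right-hand side as a cofactor expansion, reducing the claim to the determinant identity
\[
\det\!\begin{bmatrix} 1 & x_i & \cdots & x_i^{k-2} & x_i^{p+k-1}\end{bmatrix}_{i=1}^k \;=\; h_p(x_1,\ldots,x_k)\,\det V(x_1,\ldots,x_k),
\]
which it then proves via Jacobi's bialternant formula for Schur polynomials (with $\lambda=(p,0,\ldots,0)$). The vanishing for $m<k-1$ appears there as the observation that the corresponding determinants have repeated columns. So the two proofs establish the same core identity $\sigma_m = h_{m-k+1}$, but yours arrives via partial fractions and interpolation, while the paper goes through Vandermonde-like determinants and Schur polynomials. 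Your approach is more elementary and self-contained; the paper's version ties the result into the algebraic-combinatorics machinery (Schur functions) that it uses thematically elsewhere, at the cost of invoking Jacobi's bialternant as a black box.
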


There are several unexpected consequences of our work 
to the realm of symmetric functions.  For example, Theorem \ref{Theorem:CHS} in Section \ref{Section:CHS}
provides a novel probabilistic interpretation
of the complete homogeneous symmetric polynomials.  This not only recovers a 
well-known positivity result (Corollary \ref{Corollary:Hunter}), it also provides a natural method to 
extend the definition of $h_p(x_1,x_2,\ldots,x_k)$ to nonintegral $p$.

We are optimistic that Theorem~\ref{Theorem:Main} will prove to be a standard tool in the study of numerical semigroups;
statistical results about factorization lengths that before appeared intractable are now straightforward consequences of
Theorem~\ref{Theorem:Main}.  We devote all of Section~\ref{Section:Applications} to applications and examples
of our results.  
Sections~\ref{Section:ProofMoment},~\ref{Section:Combo}, and~\ref{Section:ProofMain} contain the proofs of Theorems~\ref{Theorem:Moment},~\ref{Theorem:Combo}, and~\ref{Theorem:Main} respectively.  
We wrap up in Section~\ref{Section:End} with some closing remarks.

\section{Applications and Examples}\label{Section:Applications}

This section consists of a host of examples and applications of Theorems~\ref{Theorem:Main} and~\ref{Theorem:Moment}.  
We avoid the traditional corollary-proof format, which would soon become overbearing, in favor of a more leisurely and less staccato pace.
In particular, we demonstrate how a wide variety of factorization-length statistics, some frequently considered and others more
exotic, can be examined using our methods.  The following examples and commentary illustrate the effectiveness of our techniques as well as their implementation.

We begin in Subsection~\ref{Subsection:Stats} with a brief rundown of fundamental factorization-length statistics,
giving closed-form formulas for the asymptotic behavior when convenient.  
In Subsection~\ref{Subsection:Three},
we recover all of the key results of~\cite{lengthdistribution1} on three-generator numerical semigroups.
Subsection~\ref{Subsection:Four} contains explicit formulas, all of them novel, for asymptotic statistics in four-generated semigroups.
Numerical semigroups with more generators and related phenomena are discussed in Subsection~\ref{Subsection:More}.

\subsection{Factorization-length statistics}\label{Subsection:Stats}
Fix $S = \<n_1, n_2, \ldots, n_k\>$, where as always we assume that $\gcd(n_1,n_2,\ldots,n_k)=1$.
The quasipolynomial or quasirational functions mentioned below all have 
$\QQ$-valued coefficients with periods dividing $\lcm(n_1,n_2,\ldots,n_k)$.  
For each key factorization-length statistic we provide an explicit, asymptotically
equivalent expression when available.  We say that $f(n) \sim g(n)$ if $\lim_{n\to\infty} f(n)/g(n) = 1$ and 
$f(n) = O(g(n))$ if there is a constant $C$ such that $|f(n)| \leq C|g(n)|$ for sufficiently large $n \in \NN$.

\begin{enumerate}[leftmargin=*]
\item \textbf{Number of Factorizations.}
Theorem \ref{Theorem:Moment} with $p=0$ implies that
the cardinality $|\Le\multi{n}|$ of the factorization length multiset $\Le\multi{n}$ is a quasipolynomial and
\begin{equation}\label{eq:Znk}
|\Le\multi{n}| \,\,=\,\,  \frac{n^{k-1}}{(k - 1)! (n_1 n_2 \cdots n_k)} + O(n^{k-2}).
\end{equation}

\item \textbf{Moments.}
Theorem \ref{Theorem:Moment} and \eqref{eq:Znk} imply that
the \emph{$p$th factorization length moment}
\begin{equation}\label{eq:mpn}
m_p(n)\,\,:=\,\, \frac{1}{|\Le\multi{n}|}\sum_{\ell \in \Le\multi{n}}\!\ell^p 
\,\,\, \sim \,\,  \binom{p+k-1}{p}^{\!-1}\!\! h_p\bigg( \frac{1}{n_1}, \frac{1}{n_2} ,\ldots, \frac{1}{n_k} \bigg) n^p
\end{equation}
is quasirational.  
 
\item \textbf{Mean.}
The preceding implies that the \emph{mean factorization length}
\begin{equation*}
m_1(n)\,\,:=\,\,\frac{1}{|\Le\multi{n}|} \sum_{\ell \in \Le\multi{n}} \ell \,\,
\sim \,\, \frac{n}{k}\left( \frac{1}{n_1} + \frac{1}{n_2} + \cdots + \frac{1}{n_k}\right) 
\end{equation*}
is quasirational.  It is asymptotically linear as $n \to \infty$ and 
its slope is the reciprocal of the harmonic mean of the generators of $S$.  

\item \textbf{Variance and standard deviation.}
The factorization length \emph{variance}, given by
$\sigma^2(n) := m_2(n) - (m_1(n))^2$, is quasirational by (b).
From~\eqref{eq:mpn}, we have
\begin{equation*}
\sigma^2(n) 
\,\,\sim\,\, \frac{n^2}{k^2(k+1)} \left( (k-1) \sum_{i=1}^k \frac{1}{n_i^2} - 2 \sum_{i < j} \frac{1}{n_in_j} \right ).
\end{equation*}
The \emph{standard deviation} is then $\sigma(n)$, the square root of the variance.

\item \textbf{Median.}
Theorem \ref{Theorem:Main}a ensures that the \emph{median} factorization length satisfies
\begin{equation*}
\Median \Le\multi{n} \,\,\sim\,\, \beta n ,
\end{equation*}
in which $\beta \in [0,1]$ is the unique positive real number so that $\int_0^{\beta} F(t)\,dt = \frac{1}{2}$.  Note that it was already demonstrated in~\cite{lengthdistribution1} that $\beta$ can be irrational, even when $k = 3$, in which case the median cannot not be quasirational in $n$.  

\item \textbf{Mode.}
Since the function $F$ is known to be unimodal (see \cite[Thm.~1]{Curry} and \cite{Bottcher}), the \emph{mode} factorization length satisfies
\begin{equation*}
\Mode \Le\multi{n} \,\,\sim\,\, n \operatorname{arg max} F(x),
\end{equation*}
in which $\operatorname{arg max} F(x)$ is the unique value in $[0,1]$ at which $F$
assumes its absolute maximum. 

\item \textbf{Skewness.}
The factorization length \emph{skewness} is
\begin{equation*}
\Skew \Le\multi{n} 
:= \frac{1}{| \Le\multi{n}|} \sum_{\ell \in \Le\multi{n}} \left( \frac{\ell - m_1(n)}{\sigma(n)} \right)^3
= \frac{m_3(n) - 3m_1(n) \sigma^2(n) - m_1(n)^3}{\sigma^3(n)} ,
\end{equation*}
the third centered moment.  In light of (b), (c), and (d), an explicit asymptotic formula for $\Skew \Le\multi{n}$
can be given, although we refrain from doing so.  

\item \textbf{Min / Max.}
The minimum and maximum factorization lengths satisfy
\begin{equation*}
\max \Le\multi{n}  \,\,\sim\,\, \frac{n}{n_1}
\qquad \text{and} \qquad
\min \Le\multi{n} \,\,\sim\,\, \frac{n}{n_k}.
\end{equation*}
This follows from Theorem \ref{Theorem:Main}a since the distribution $F(t)$ is supported on $[1/n_k,1/n_1]$ and places
mass on any open neighborhood of its endpoints (it is known that $\max \Le\multi{n}$ and $\min \Le\multi{n}$ are linear quasipolynomials with leading coefficients $1/n_1$ and $1/n_k$,
respectively \cite[Theorems~4.2 and~4.3]{minmaxquasi}).

\item \textbf{Harmonic mean.} 
The \emph{harmonic mean} factorization length satisfies
\begin{equation*}
H(n)\,\,:=\,\, \frac{| \Le\multi{n} |}{\sum_{\ell \in \Le\multi{n}} \ell^{-1}}
\,\, \sim \,\,  \frac{n}{ \int_0^1 t^{-1} F(t)\, dt}.
\end{equation*}
The integral is taken over $[0,1]$ for convenience; 
since $F$ is supported on $[1/n_k,1/n_1]$, the integrand vanishes at $t=0$.

\item \textbf{Geometric mean.}
The \emph{geometric mean} factorization length satisfies
\begin{equation*}
G(n) \,\,:=\,\, \Big( \prod_{\ell \in \Le\multi{n}} \ell \Big)^{\frac{1}{|\Le\multi{n}|}} \,\,\sim\,\, n e^{\int_0^1 (\log t) F(t)\,dt}
\end{equation*}
since
\begin{align*}
\log G(n)
&= \frac{1}{| \Le \multi{n} | } \sum_{\ell \in \Le\multi{n}} \log \ell
= \frac{1}{| \Le \multi{n} | } \sum_{\ell \in \Le\multi{n}} \bigg( \log \frac{\ell}{n} + \log n\bigg) \\
&= \log n +  \frac{1}{| \Le \multi{n} | } \sum_{\ell \in \Le\multi{n}}  \log \frac{\ell}{n} 
\,\,\sim\,\, \log n + \int_0^1 (\log t) F(t)\,dt.
\end{align*}
\end{enumerate}

For the sake of uniformity, we often prefer to use the more explicit notation
$\Mean \Le\multi{n}$,
$\Median \Le\multi{n}$,
$\Mode \Le\multi{n}$,
$\Var \Le\multi{n}$,
$\StDev \Le\multi{n}$,
$\HarMean \Le\multi{n}$, 
$\Skew \Le\multi{n}$, and
$\GeoMean \Le\multi{n}$,
instead of distinctive symbols, such as $\mu(n)$ or $\sigma(n)$.

\subsection{Three generators: triangular distribution}\label{Subsection:Three}
\begin{figure}
    \centering
    \begin{tikzpicture}[thick,scale=0.6, every node/.style={scale=0.9}]
        \draw[->,thin](0,0)--(17,0);
        \draw[->,thin](0,0)--(0,5.5);
      
        \draw[thick,black](1,0)to(6,5);
        \draw[thick,black](6,5)to(15,0);
        \draw[fill=black](1,0) circle (.05cm) node[below,yshift=-2pt]{$\frac{1}{n_3}$};
        \draw[fill=black](15,0) circle (.05cm) node[below,yshift=-2pt]{$\frac{1}{n_1}$};
        \draw[fill=black](6,5) circle (.05cm);

        \draw[thin] (-0.15,5)node[left]{$\frac{2n_1n_3}{n_3-n_1}$}--(0.15,5);

        \draw[thin] (6,-0.15)node[below]{$\frac{1}{n_2}$}--(6,0.15);
        \draw[thin,dashed] (6,0)--(6,5);
        
    \end{tikzpicture}
    \caption{The asymptotic length distribution function $F(x)$ for a three-generated semigroup $S = \<n_1,n_2,n_3\>$ is a
    triangular distribution on $[1/n_3, 1/n_1]$ with peak of height $2n_1n_3 / (n_3-n_1)$ at $1/n_2$.}
    \label{Figure:Triangle}
\end{figure}
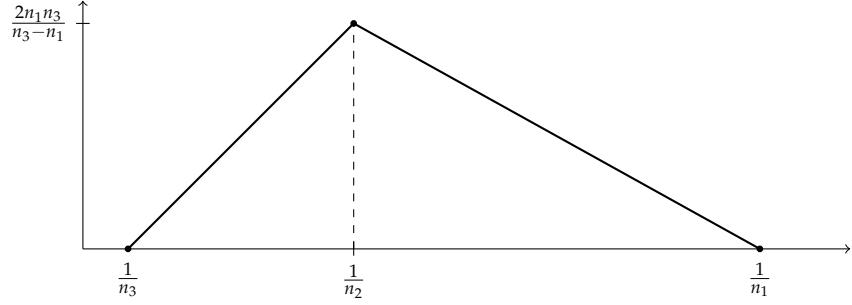	

The asymptotic behavior of factorization lengths in three-generator semigroups
was studied in \cite{lengthdistribution1} with other methods.
Theorem \ref{Theorem:Main} recovers all of the main results from that paper.

For $S = \<n_1,n_2,n_3\>$, the function $F(x)$ of Theorem \ref{Theorem:Main} is a triangular distribution; see Figure \ref{Figure:Triangle}.  Indeed,
letting $k=3$ and $(a,b,c) = (\frac{1}{n_3}, \frac{1}{n_1}, \frac{1}{n_2})$ in Theorem \ref{Theorem:Main} we obtain
\begin{equation}\label{eq:TriangleF}
F(x) 
=
\begin{cases}
0 & \text{if $x \leq a$},\\
\dfrac{2(x-a)}{(b-a)(c-a)} & \text{for $a \leq x \leq c$}, \\[10pt]    
\dfrac{2(b-x)}{(b-a)(b-c)} & \text{for $c < x \leq b$},\\
0 & \text{if $x \geq b$}.
\end{cases}
\end{equation}
This is the familiar triangular distribution on $[a,b]$ with peak of height $2/(b-a)$
at $c \in (a,b)$ \cite[Ch.~40]{Evans}, \cite[Ch.~1]{Kotz}.
As predicted in the comments after Theorem~\ref{Theorem:Main}, the distribution function
is continuous but not everywhere differentiable. 
The standard properties of the triangular distribution provide us with the asymptotic behavior
of lengths in three-generated semigroups:
\begin{align*}
\Mean \Le\multi{n}&\,\,\sim\,\, \frac{n}{3}\left( \frac{1}{n_1} + \frac{1}{n_2} + \frac{1}{n_3} \right)  ,\\[5pt]
\Median \Le\multi{n} &\,\,\sim\,\, 
n \cdot \footnotesize
\begin{cases}\displaystyle
\frac{1}{n_3} + \sqrt{ \frac{1}{2} \left( \frac{1}{n_1} - \frac{1}{n_3}\right)\left( \frac{1}{n_2} - \frac{1}{n_3} \right)}
&\displaystyle \text{if $\frac{1}{n_2} \geq \frac{1}{2} \left( \frac{1}{n_1} + \frac{1}{n_3} \right)$},\\[10pt]
\displaystyle\frac{1}{n_1} - \sqrt{ \frac{1}{2} \left( \frac{1}{n_1} - \frac{1}{n_3}\right)\left( \frac{1}{n_1} - \frac{1}{n_2} \right)}
&\displaystyle \text{if $\frac{1}{n_2} < \frac{1}{2} \left( \frac{1}{n_1} + \frac{1}{n_3} \right)$},\\
\end{cases}
\\[5pt]
\Mode \Le \multi{n} &\,\,\sim\,\, \frac{n}{n_2},  \\[5pt]
\Var \Le \multi{n} &\,\,\sim\,\, \frac{n^2}{18}\left( \frac{1}{n_1^2} + \frac{1}{n_2^2} + \frac{1}{n_3^2} - \frac{1}{n_1 n_2} - \frac{1}{n_2 n_3}- \frac{1}{n_3 n_1} \right), \quad \text{and} \\[5pt]
\Skew \Le \multi{n}
&\,\,\sim\,\,
\frac{\sqrt{2}\Big(\frac{1}{n_1} + \frac{1}{n_3} - \frac{2}{n_2}\Big)
\Big(\frac{2}{n_1} - \frac{1}{n_3} - \frac{1}{n_2}\Big)\Big(\frac{1}{n_1} - \frac{2}{n_3} + \frac{1}{n_2}\Big)}
{5\Big( \frac{1}{n_1^2} +  \frac{1}{n_2^2} +  \frac{1}{n_3^2} - \frac{1}{n_1n_2} - \frac{1}{n_1 n_3} - \frac{1}{n_2 n_3} \Big)^{3/2} }.
\end{align*}
The harmonic and geometric means can also be worked out in closed form;
the interested reader may wish to pursue the matter further.

\begin{Example}
Consider the McNugget semigroup $S = \<6,9,20\>$.  The normalized histogram of the length multiset $\Le \multi{n}$
rapidly approaches the corresponding triangular distribution with parameters $(a,b,c) = (\frac{1}{20}, \frac{1}{9}, \frac{1}{6})$;
see Figure \ref{Figure:McNugget}.
The asymptotic formulae furnished by our results perform admirably in estimating key factorization-length statistics; see
Table \ref{Table:McNugget}.
\end{Example}

\begin{figure}
\centering
		\begin{subfigure}[b]{0.475\textwidth}
	                \centering
	                \includegraphics[width=\textwidth]{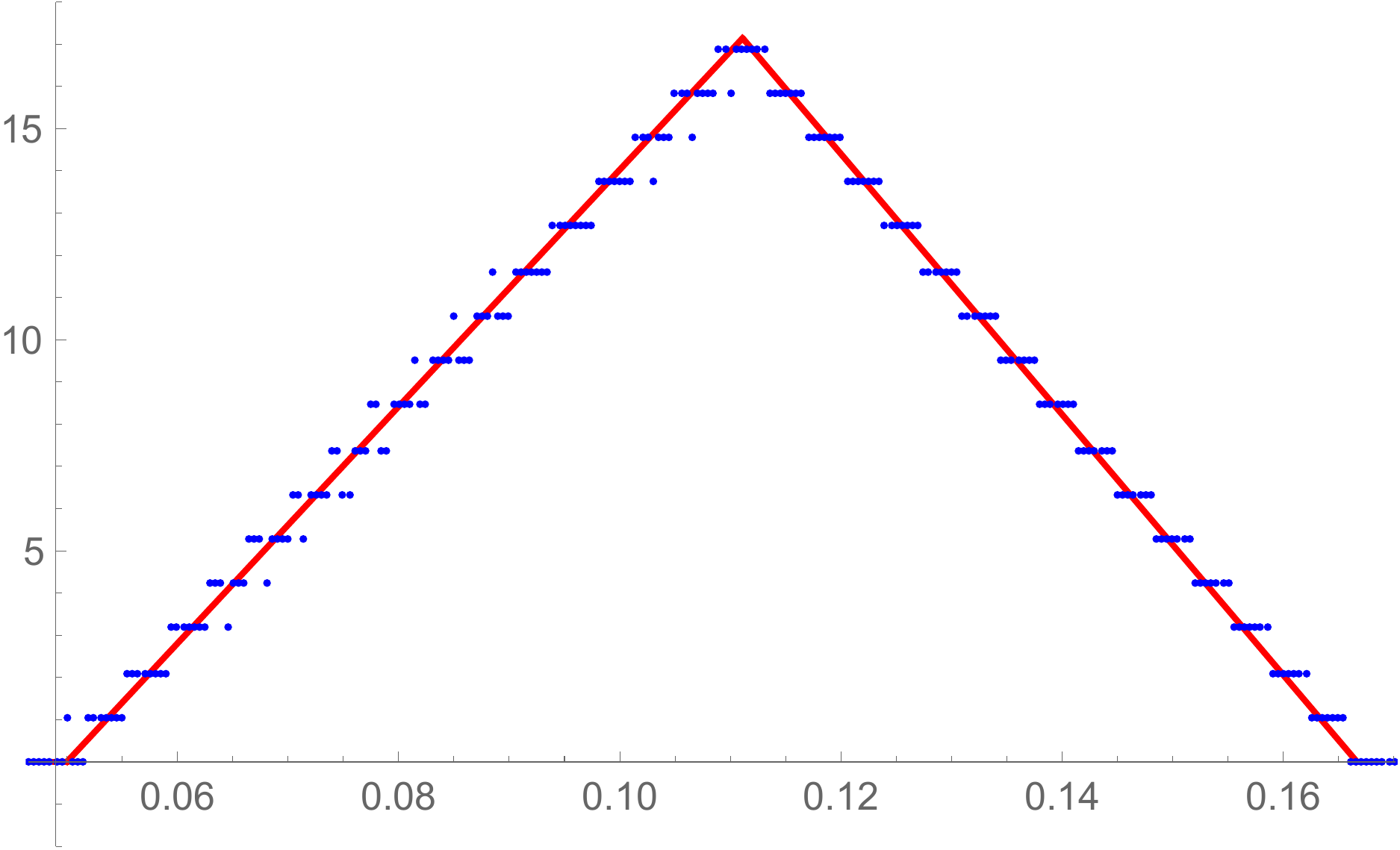}
	                \caption{$n=2{,}000$}
	        \end{subfigure}
		\begin{subfigure}[b]{0.475\textwidth}
	                \centering
	                \includegraphics[width=\textwidth]{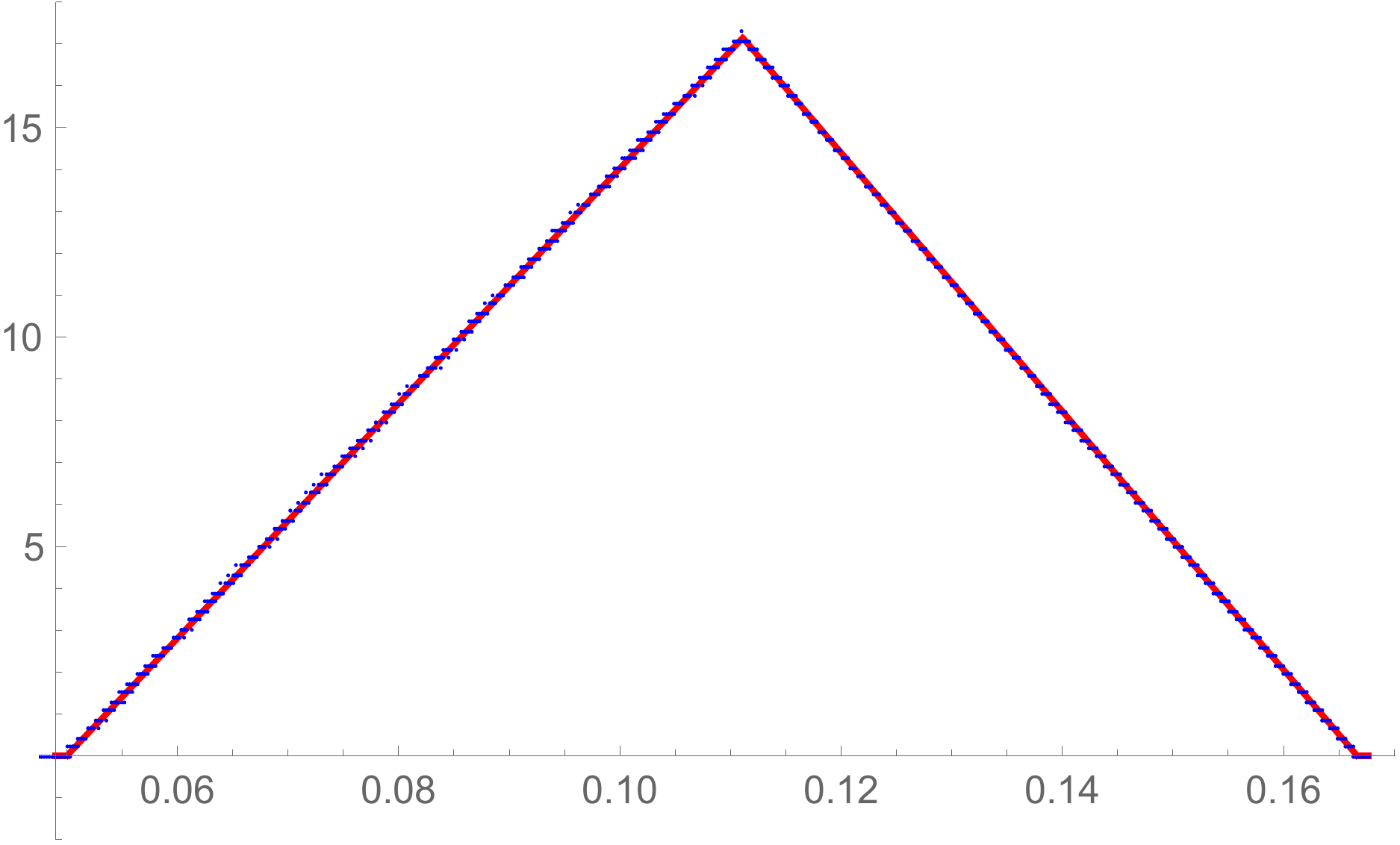}
	                \caption{$n=10{,}000$}
	        \end{subfigure}

\caption{Normalized histogram of the length multiset  $\Le\multi{n}$ (blue) and graph of the length distribution function $F(x)$ (red) for $S =  \<6,9,20\>$.  For $i \in \NN$, a blue dot occurs above $i/n$ at height equal to the multiplicity of $i$ in $\Le\multi{n}$.}
\label{Figure:McNugget}
\end{figure}

\begin{table}\footnotesize
\begin{equation*}
\begin{array}{c|cc||c|cc}
\text{Statistic} & \text{Actual} & \text{Predicted} & \text{Statistic} & \text{Actual} & \text{Predicted} \\
\hline
\Mean \Le\multi{10^5} & 10925.14 &10925.93 & \HarMean \Le\multi{10^5} & 10359.00 & 10359.86\\[3pt]
\Median \Le\multi{10^5} & 10970 & 10970.61 & \GeoMean \Le\multi{10^5} & 10650.22 & 10651.03\\[3pt]
\Mode\Le\multi{10^5} &\scalebox{0.85}{\{11109,11110,11111\} }& 11111.11 & \Skew \Le\multi{10^5} & -0.046593 & -0.046592 \\[3pt]
\StDev\Le\multi{10^5} & 2382.40 & 2382.35 & \min/\max \Le\multi{10^5} &\scalebox{0.85}{5000/16662} & \scalebox{0.85}{5000.00/16666.67}
\end{array}
\end{equation*}
\caption{Actual versus predicted statistics (rounded to two decimal places) 
for $\Le\multi{10^5}$, the multiset of factorization lengths of $100{,}000$, in 
$S = \<6,9,20\>$.}
\label{Table:McNugget}
\end{table}

\subsection{Four generators: piecewise quadratic}\label{Subsection:Four}
For $S = \<n_1,n_2,n_3,n_4\>$, the function $F(x)$ of Theorem \ref{Theorem:Main} is piecewise quadratic
and can be worked out in closed form:
\begin{align*}
F(x)
&=3 n_1 n_2 n_3 n_4\times 
\begin{cases}
0 & \text{if $x < \frac{1}{n_4}$},\\[5pt]
\dfrac{(1-n_4 x)^2}{(n_4-n_1) (n_4-n_2) (n_4-n_3)} & \text{if $\frac{1}{n_4} \leq x \leq \frac{1}{n_3}$},\\[12pt]
 f(x)
 & \text{if $\frac{1}{n_3} \leq x \leq \frac{1}{n_2}$},\\[4pt]
\dfrac{(1-n_1 x)^2}{(n_2-n_1) (n_3-n_1) (n_4 -n_1)} & \text{if $\frac{1}{n_2} \leq x \leq \frac{1}{n_1}$},\\[8pt]
0 & \text{if $x > \frac{1}{n_1}$},
\end{cases}
\end{align*}
in which
{\small
\begin{equation*}
f(x)=\frac{(n_1 n_2 n_3 +n_1n_2n_4-n_1 n_3 n_4-n_2 n_3 n_4) x^2-2( n_1 n_2- n_3 n_4) x
+(n_1+n_2-n_3-n_4)}{(n_3-n_1) (n_3-n_2) (n_4-n_1) (n_4-n_2)}.
\end{equation*}
}%
We remark that this explicit formula for the length distribution function completely answers the open problem suggested at the end of \cite{lengthdistribution1}.
As predicted by the comments after Theorem \ref{Theorem:Main}, $F$ is continuously differentiable but not twice differentiable. 
Moreover, one can see that $F$ is unimodal and that its absolute maximum is attained in $( \frac{1}{n_3}, \frac{1}{n_2})$.
A few computations reveal that
\begin{align*}
\Mean\Le\multi{n} 
&\,\, \sim \,\, \frac{n}{4}\left( \frac{1}{n_1} + \frac{1}{n_2} + \frac{1}{n_3} + \frac{1}{n_4}\right) ,\\[3pt]
\Mode \Le\multi{n} &\,\, \sim\,\,
\left(\frac{n_1 n_2-n_3 n_4}{n_1 n_2 n_3+n_1 n_2 n_4 - n_1 n_3 n_4-n_2 n_3 n_4}\right)n, \quad \text{and}\\[3pt]
\Var \Le \multi{n} &\,\,\sim\,\, \frac{n^2}{80}\left( 4\sum_{i-1}^4 \frac{1}{n_i^2} - 2 \sum_{i<j} \frac{1}{n_i n_j}\right).
\end{align*}
The asymptotic median factorization length is not so amenable to closed-form expression, although it is easily 
computed for specific semigroups as we see below.  

\begin{Example}
For $S = \<11,34,35,36\>$, we have 
\begin{equation*}
F(x) = 1413720
\begin{cases}
0 & \text{if $x \leq \frac{1}{36}$},\\
\frac{1}{50} (36 x-1)^2 & \text{if $\frac{1}{36} \leq x \leq \frac{1}{35}$},\\[5pt]
\frac{1}{600} \left(-15073 x^2+886 x-13\right) & \text{if $\frac{1}{35} \leq x \leq \frac{1}{34}$},\\[5pt]
\frac{1}{13800}(11 x-1)^2 & \text{if $\frac{1}{34} \leq x \leq \frac{1}{11}$},\\[5pt]
0 & \text{if $x > \frac{1}{11}$};
\end{cases}
\end{equation*}
see Figure \ref{Figure:11343536}. 
Elementary computation confirms that the median of the distribution function $F(x)$ occurs in
$[\frac{1}{34}, \frac{1}{11}]$.  For $x \in [\frac{1}{34}, \frac{1}{11}]$, we find that
\begin{equation*}
\int_0^x F(t)\,dt = 
\frac{11}{115} (43197 x^3-11781 x^2+1071 x-22)
\end{equation*}
attains the value $\frac{1}{2}$ at precisely one point, namely
$\frac{1}{11}(1-\sqrt[3]{\frac{115}{714}}) \approx 0.041$.
Thus,
\begin{equation*}
\Median  \Le\multi{n} \,\, \sim \,\, \frac{1}{11}\left(1-\sqrt[3]{\frac{115}{714}}\right) n.
\end{equation*}
Table \ref{Table:11343536} provides factorization-length statistics for $\Le\multi{10^5}$
and the strikingly accurate approximations furnished by our results.  
\end{Example}

\begin{figure}
\centering
		\begin{subfigure}[b]{0.475\textwidth}
	                \centering
	                \includegraphics[width=\textwidth]{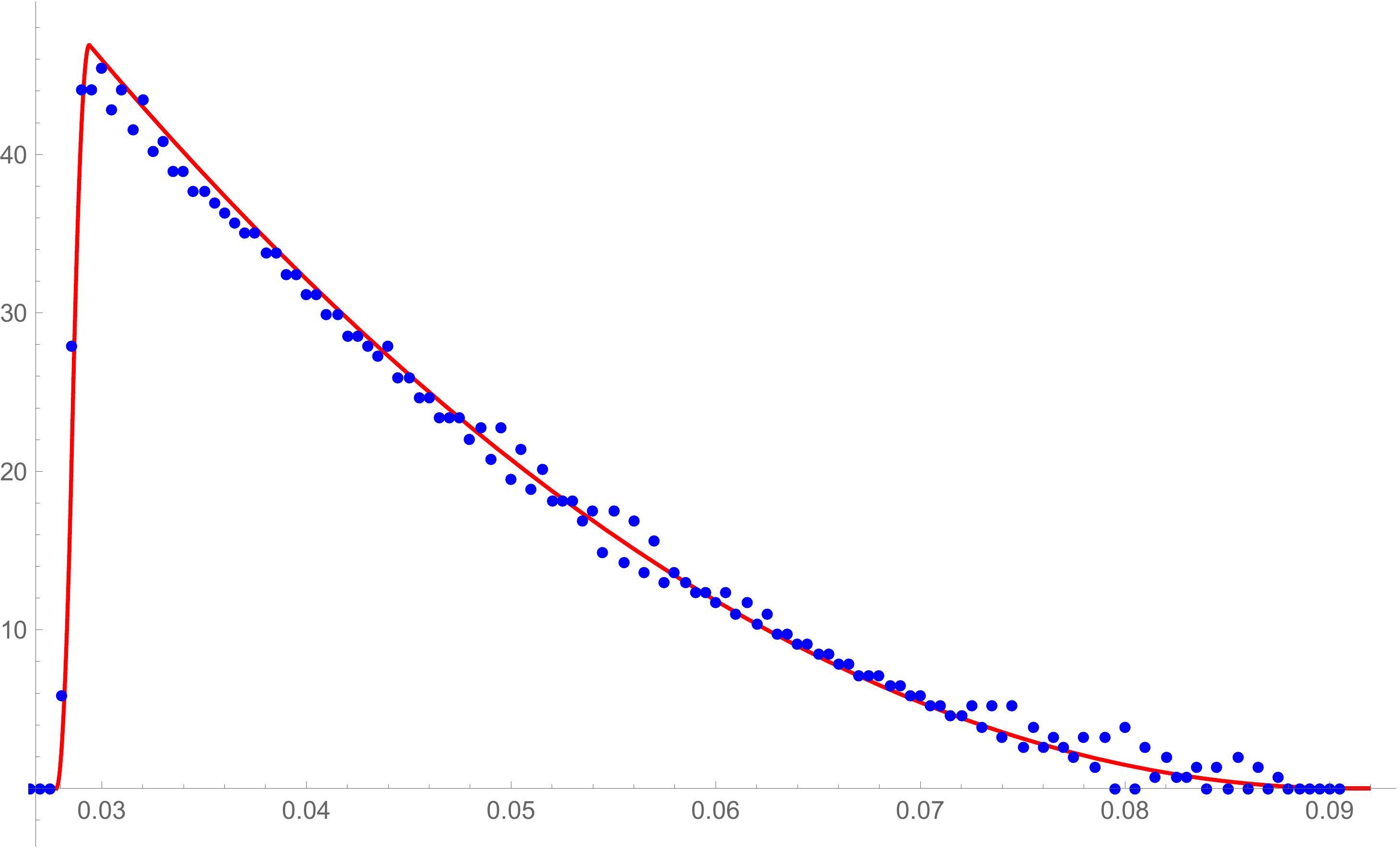}
	                \caption{$n=2{,}000$}
	        \end{subfigure}
		\begin{subfigure}[b]{0.475\textwidth}
	                \centering
	                \includegraphics[width=\textwidth]{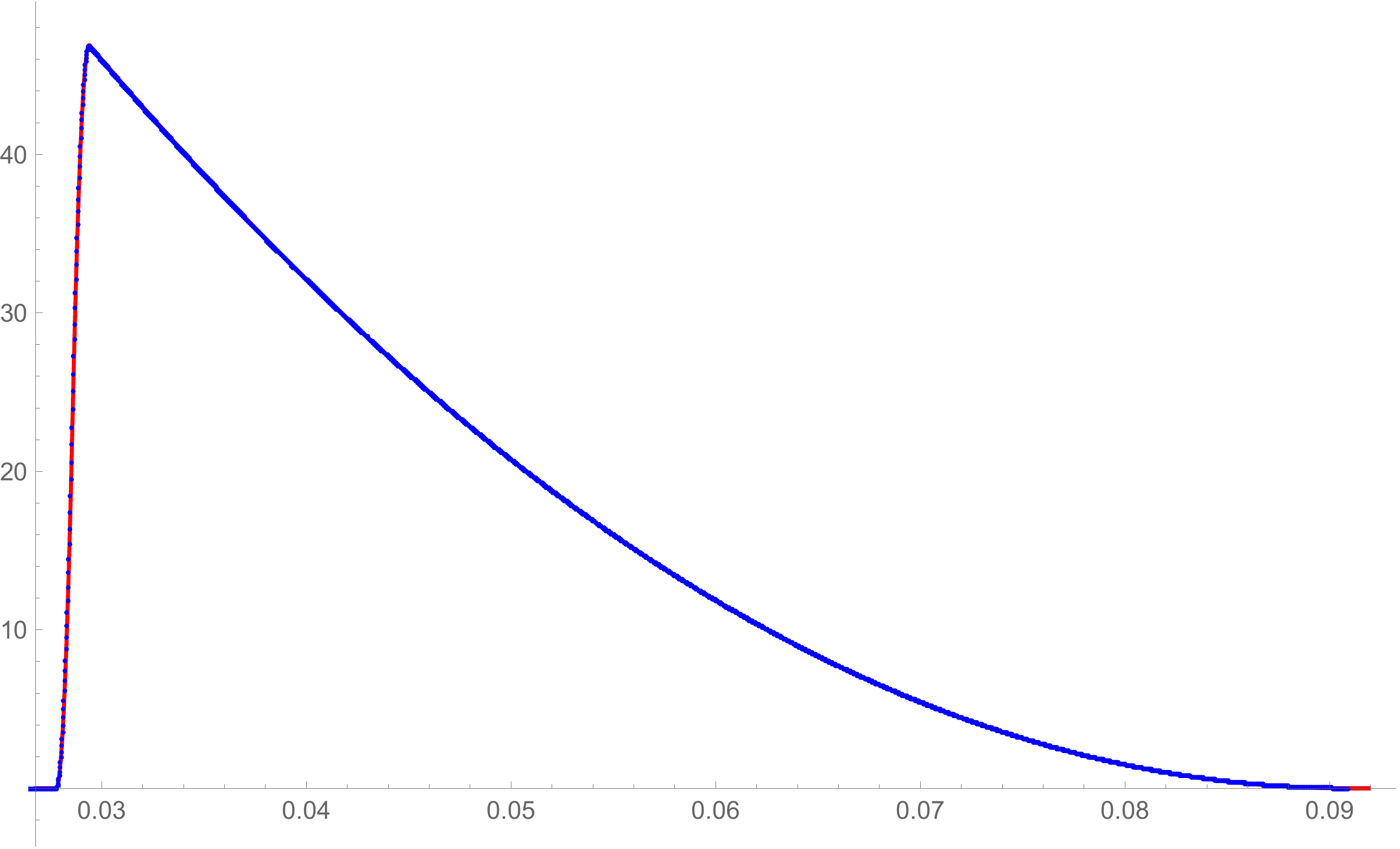}
	                \caption{$n=50{,}000$}
	        \end{subfigure}

\caption{Normalized histogram of the length multiset  $\Le\multi{n}$ (blue) and graph of the length distribution function $F(x)$ (red) for $S =  \<11,34,35,36\>$.  For each $i \in \NN$ a blue dot occurs above $i/n$ at height equal to the multiplicity of $i$ in $\Le\multi{n}$.}
\label{Figure:11343536}
\end{figure}

\begin{table}\footnotesize
\begin{equation*}
\begin{array}{c|cc||c|cc}
\text{Statistic} & \text{Actual} & \text{Predicted} &\text{Statistic} & \text{Actual} & \text{Predicted} \\
\hline
\Mean \Le\multi{10^5} & 4417.31 & 4416.76 &\HarMean \Le\multi{10^5} & 4130.30  & 4130.03\\[3pt]
\Median \Le\multi{10^5} & 4145 & 4144.69& \GeoMean \Le\multi{10^5} &  4266.46& 4266.06\\[3pt]
\Mode\Le\multi{10^5} &2939& 2939.03 & \Skew \Le\multi{10^5} & 0.8594802 & 0.8594804 \\[3pt]
\StDev\Le\multi{10^5} &  1207.84&1207.14 & \min / \max \Le \multi{10^5} &\scalebox{0.9}{2778/9082} & \scalebox{0.9}{2777.78/9090.91}\\[3pt]
\end{array}
\end{equation*}
\caption{Actual versus predicted statistics (rounded to two decimal places) 
for $\Le\multi{10^5}$, the multiset of factorization lengths of $100{,}000$, in 
$S = \<11,34,35,36\>$.}
\label{Table:11343536}
\end{table}

\begin{figure}
		\begin{subfigure}[b]{0.475\textwidth}
	                \centering
	                \includegraphics[width=\textwidth]{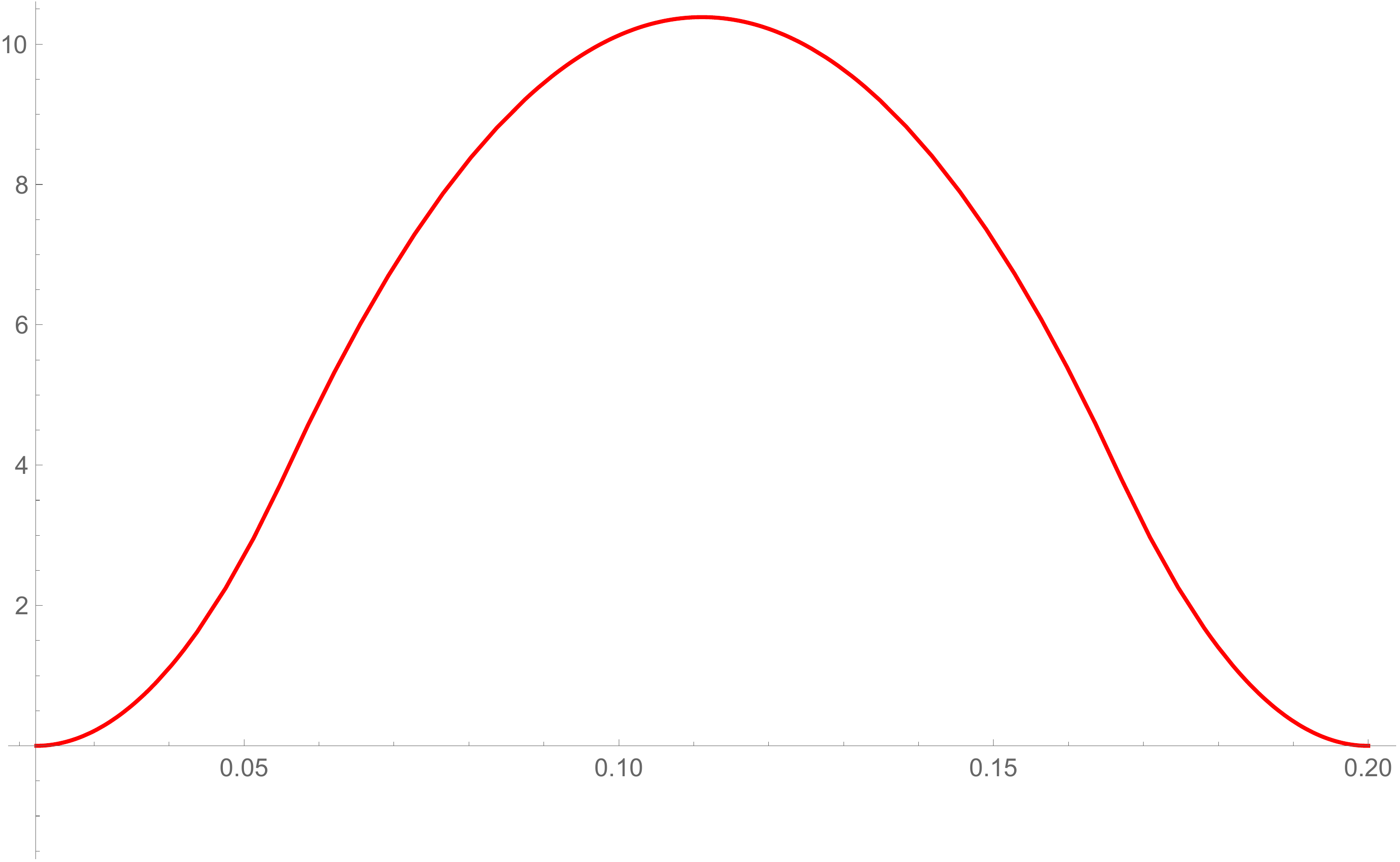}
	                \caption{$S=\<5,6,18,45\>$}
	        \end{subfigure}
		\begin{subfigure}[b]{0.475\textwidth}
	                \centering
	                \includegraphics[width=\textwidth]{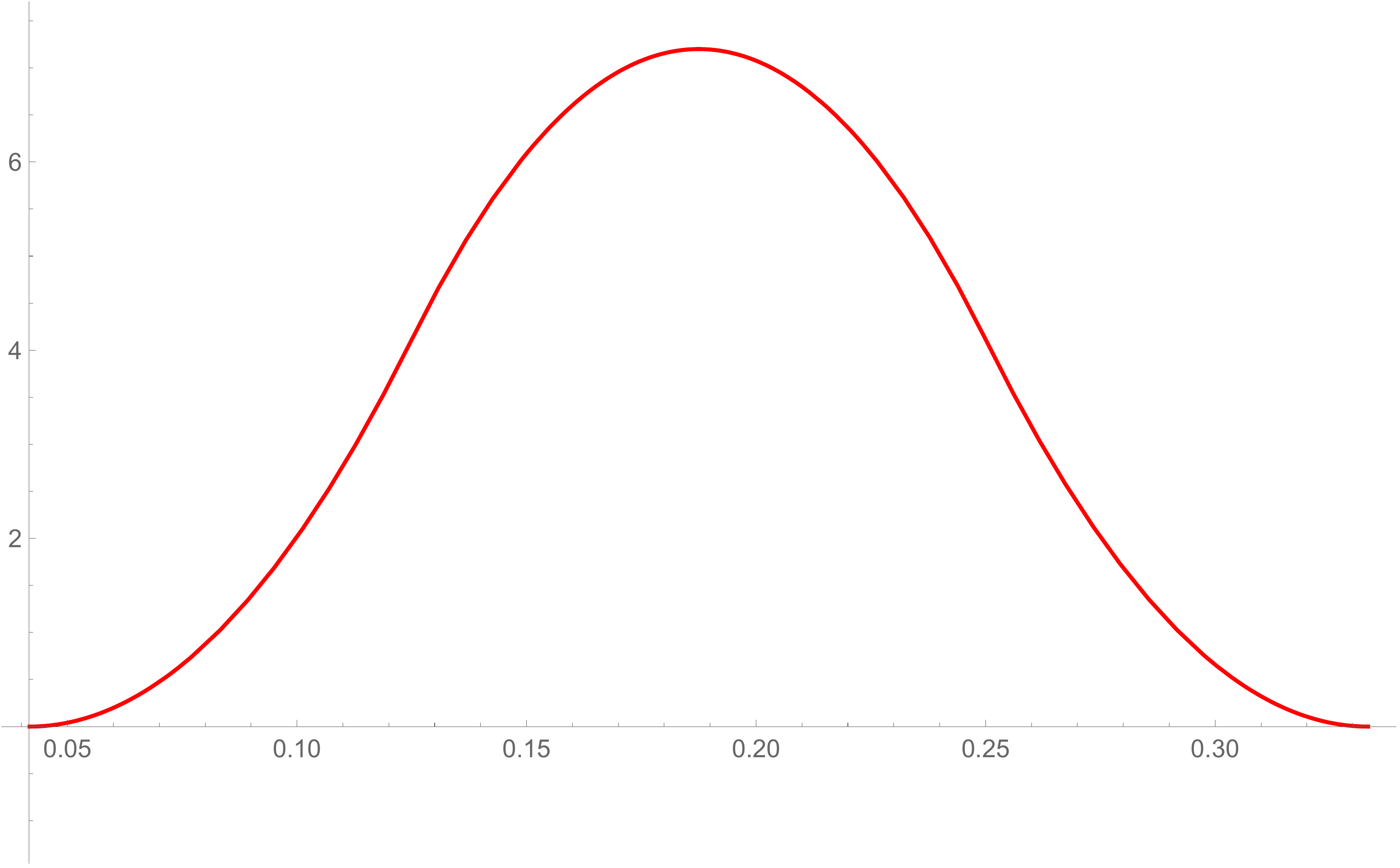}
	                \caption{$S = \<3,4,8,24\>$}
	        \end{subfigure}
\caption{Highly symmetric distribution functions $F(x)$ for two numerical semigroups $S$ chosen by virtue of an Egyptian-fraction identity;
see Example \ref{Example:Egyptian}.}
\label{Figure:Symmetry}
\end{figure}

\begin{Example}\label{Example:Egyptian}
The factorization-length skewness, being expressible in terms of the first and third moments, and the variance,
can be given in closed form:
\begin{equation*}
\Skew \Le\multi{n}
\,\,\sim\,\,
\frac{2 \sqrt{5} (a+b-c-d) (a-b+c-d) (a-b-c+d)}{\left(3 (a^2+b^2+c^2+d^2)   - 2 (a b + a c + b c + a d + b d + c d)\right)^{3/2}},
\end{equation*}
in which $(a,b,c,d) = (\frac{1}{n_1}, \frac{1}{n_2}, \frac{1}{n_3}, \frac{1}{n_4} )$.  In particular, $\Skew \Le \multi{n}$ tends to zero
(that is, $F$ tends to be highly symmetric) if and only if one of the following occurs:
\begin{equation*}
\frac{1}{n_1} + \frac{1}{n_3} = \frac{1}{n_2} + \frac{1}{n_4} \qquad \text{or} \qquad
\frac{1}{n_1} + \frac{1}{n_4} = \frac{1}{n_2} + \frac{1}{n_3}.
\end{equation*}
For example, the equalities
\begin{equation*}
\frac{1}{5} + \frac{1}{45} = \frac{1}{6} + \frac{1}{18} 
\qquad \text{and} \qquad
\frac{1}{3} + \frac{1}{24}= \frac{1}{4} + \frac{1}{8} 
\end{equation*}
yield two numerical semigroups with highly symmetric length distribution functions; see Figure \ref{Figure:Symmetry}.
This highlights another connection between Egyptian fractions and the statistical properties of length distributions in numerical semigroups
\cite{lengthdistribution1}.
\end{Example}

Similar computations can be carried out for semigroups with more generators, although it becomes rapidly less rewarding
to search for answers in closed form as the number of generators increases.  We leave the details and particulars
of such computations to the reader.

\subsection{Additional examples}\label{Subsection:More}
In this section, we give two final examples.  
The first points out a curious, but easily explained, phenomenon related to the constant
\begin{equation*}
\delta = \gcd(n_k-n_{k-1}, n_{k-1}-n_{k-2}, \ldots, n_2-n_1),
\end{equation*}
which arises in the semigroup literature as the minimum element of the \emph{delta set} (see~\cite{DSB} for more on this invariant).  Since $n_i \equiv n_j \pmod{\delta}$ for every 
$i, j$, it follows that $\delta$ is the smallest distance that can occur between distinct factorization lengths of $n$, meaning all factorization lengths of a given $n$ are equivalent modulo~$\delta$.  If~$\delta > 1$, then this causes ``gaps'' between positive values in the length multiset.  The question of decomposing $\Le\multi{n}$ along arithmetic sequences is treated in~\cite{lengthdistribution3}.

\begin{Example}
Let $S = \<9,11,13,15,17\>$, for which $\delta = 2$.  If $n$ is even, then every element of $\Le\multi{n}$ is even, and if $n$ is odd, then every element of $\Le\multi{n}$ is odd.    
The corresponding length distribution function is
\begin{equation*}
F(x) = \frac{109395}{32}
\begin{cases}
0 & \text{if $x<\frac{1}{17}$},\\[2pt]
(17 x-1)^3 & \text{if $\frac{1}{17} \leq x < \frac{1}{15}$},\\[2pt]
3 - 129 x + 1833 x^2 - 8587 x^3 & \text{if $\frac{1}{15} \leq x < \frac{1}{13}$},\\[2pt]
-3 + 105 x - 1209 x^2 + 4595 x^3 & \text{if $\frac{1}{13} \leq x < \frac{1}{11}$},\\[2pt]
(1 - 9 x)^3 & \text{if $\frac{1}{11} \leq x < \frac{1}{9}$},\\
0 & \text{if $\frac{1}{9} \leq x$},\\
\end{cases}
\end{equation*}
which appears to be half the height of the upper curve suggested by the blue dots in Figure \ref{Figure:911131517} since the factorization lengths of $n$ all have identical parity.  
In other words, the upper curve suggested by the blue dots in Figure~\ref{Figure:911131517} must be ``averaged out'' by $\delta$ to produce the red curve, which depicts $F(x)$.  The predictions afforded by our methods in this case, as outlined in Table~\ref{Table:Five}, are still surprisingly accurate.
\end{Example}

\begin{figure}
		\begin{subfigure}[b]{0.475\textwidth}
	                \centering
	                \includegraphics[width=\textwidth]{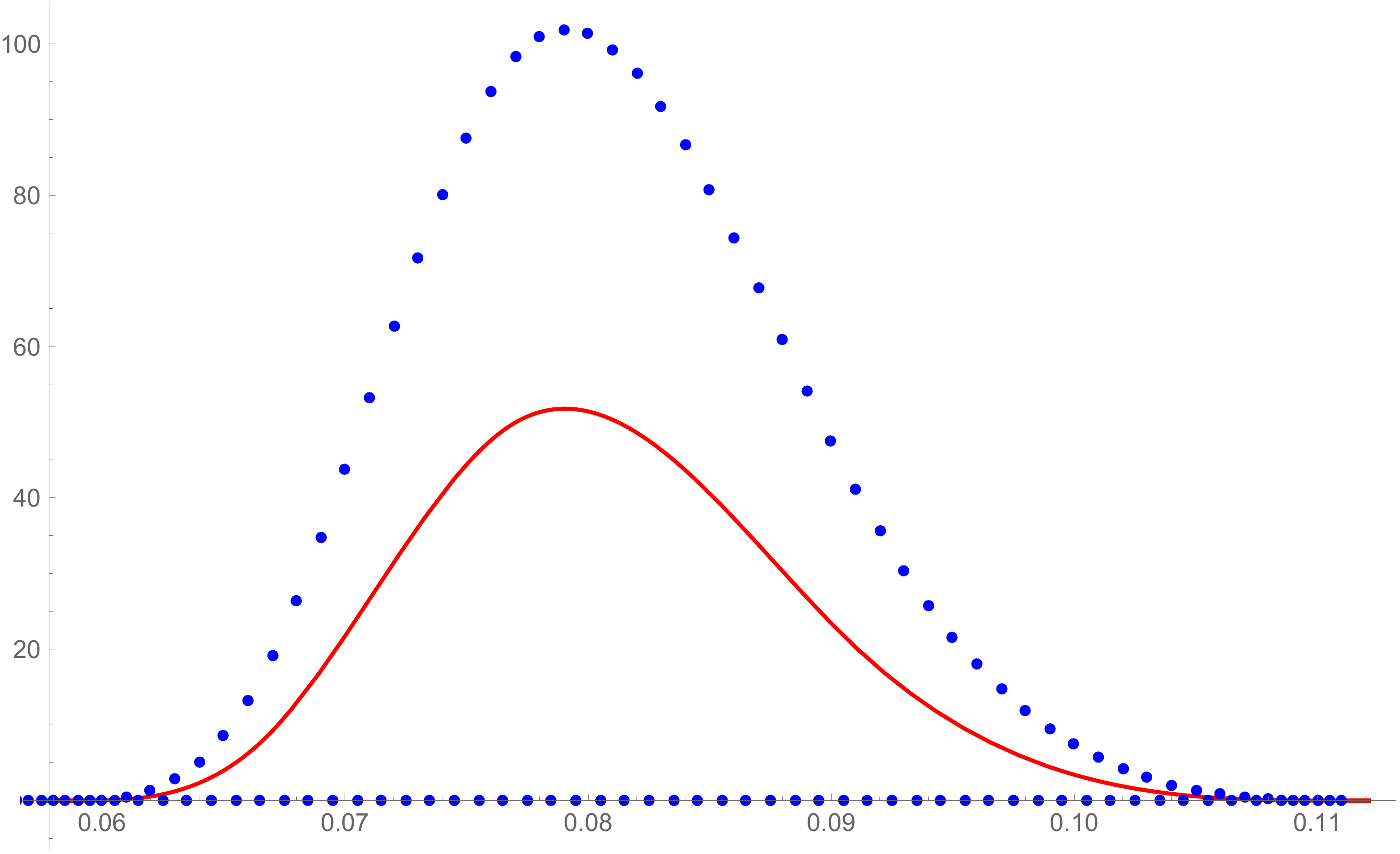}
	                \caption{$n=2{,}000$}
	        \end{subfigure}
		\begin{subfigure}[b]{0.475\textwidth}
	                \centering
	                \includegraphics[width=\textwidth]{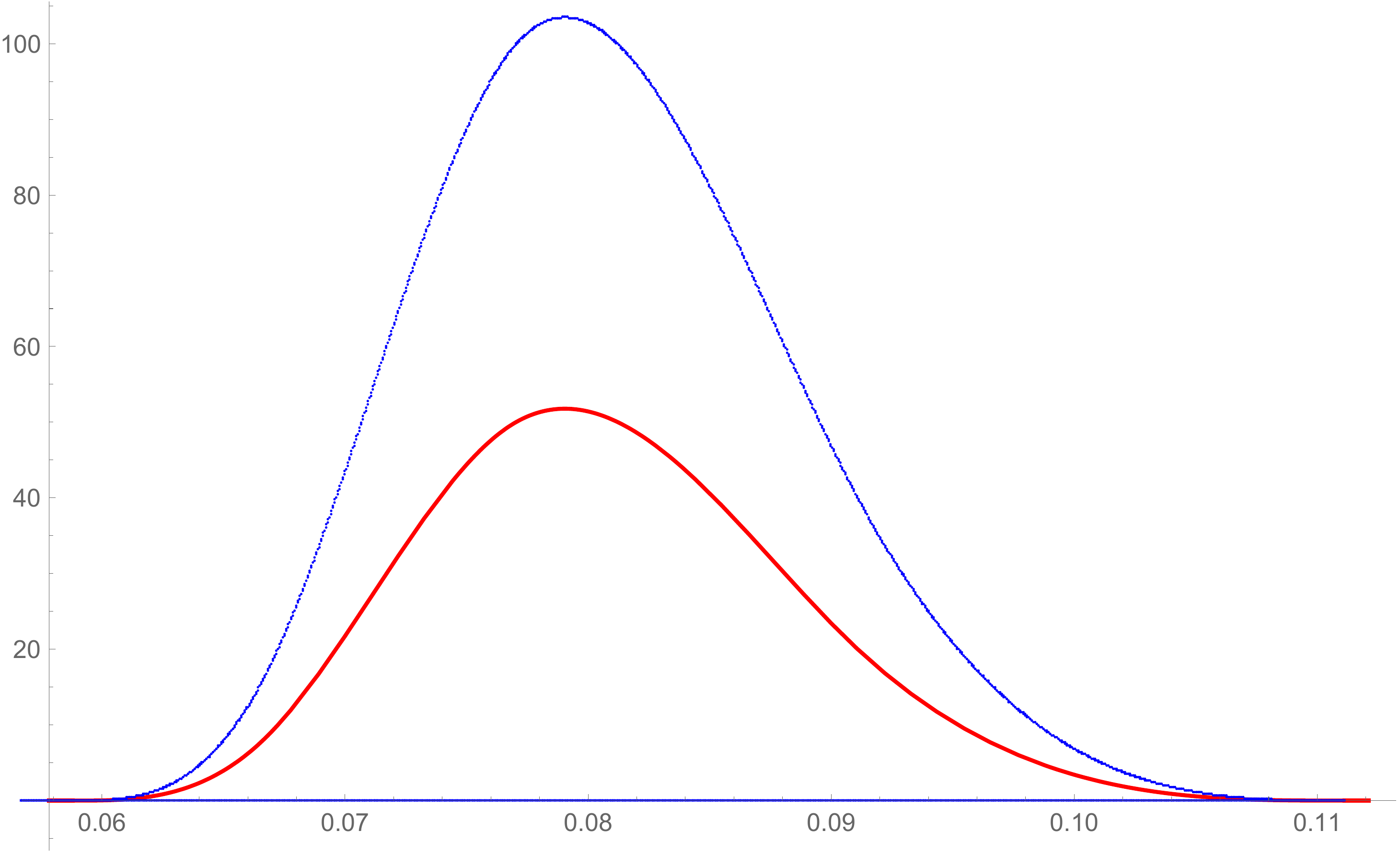}
	                \caption{$n=50{,}000$}
	        \end{subfigure}
\caption{Normalized histogram of the length multiset  $\Le\multi{n}$ (blue) and graph of the length distribution function $F(x)$ (red) for $S =  \<9,11,13,15,17\>$.  For each $i \in \NN$ a blue dot occurs above $i/n$ at height equal to the multiplicity of $i$ in $\Le\multi{n}$.  Since $2{,}000$ and $50{,}000$ are even, there are 
no factorizations of odd length.  Thus, the red curve is half the height of the upper curve suggested by the blue points.}
\label{Figure:911131517}
\end{figure}

\begin{table}\footnotesize
\begin{equation*}
\begin{array}{c|cc||c|cc}
\text{Statistic} & \text{Actual} & \text{Predicted} &\text{Statistic} & \text{Actual} & \text{Predicted} \\
\hline
\Mean \Le\multi{10^5} & 8088.80 & 8088.67 &\HarMean \Le\multi{10^5} & 8019.043  & 8018.96\\[3pt]
\Median \Le\multi{10^5} & 8038& 8037.53& \GeoMean \Le\multi{10^5} & 8053.75 &8053.64 \\[3pt]
\Mode\Le\multi{10^5} &7904& 7904.25 & \Skew \Le\multi{10^5} & 0.32812710 & 0.32812712 \\[3pt]
\StDev\Le\multi{10^5} &757.14  &  756.89& \min / \max \Le \multi{10^5} &\scalebox{0.85}{11110/5884} & \scalebox{0.85}{11111.11/5882.35}\\[3pt]
\end{array}
\end{equation*}
\caption{Actual versus predicted statistics (rounded to two decimal places) 
for $\Le\multi{10^5}$, the multiset of factorization lengths of $100{,}000$, in 
$S = \<9,11,13,15,17\>$.}
\label{Table:Five}
\end{table}

We conclude with one final example that demonstrates the impressive estimates our techniques afford for a numerical semigroup with nine generators.

\begin{Example}
Consider $S= \<118,150,162,175,182,258,373,387,456\>$, which has nine generators.
Describing the length-distribution statistics of such a semigroup is far beyond the realm of previously-established techniques.
We spare the reader the display of the explicit length-generating function; suffice it to say, $F$ is a
piecewise polynomial function of degree $7$ (see Figure~\ref{Figure:Nine}).
A few computations with a computer algebra system provide accurate approximations to the relevant statistics; see Table~\ref{Table:Nine}.
\end{Example}

\begin{figure}
\centering
		\begin{subfigure}[b]{0.475\textwidth}
	                \centering
	                \includegraphics[width=\textwidth]{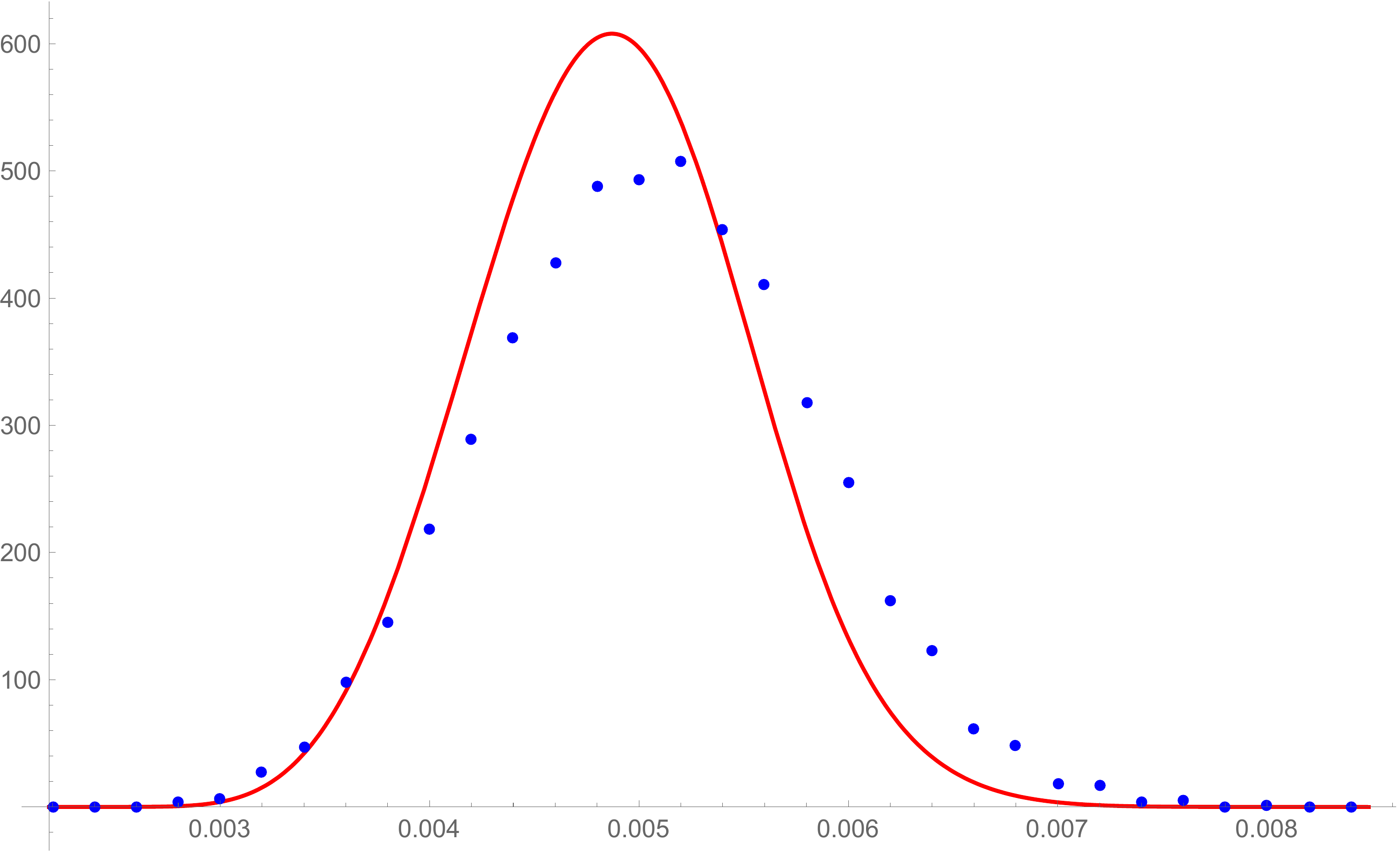}
	                \caption{$n=5{,}000$}
	        \end{subfigure}
		\begin{subfigure}[b]{0.475\textwidth}
	                \centering
	                \includegraphics[width=\textwidth]{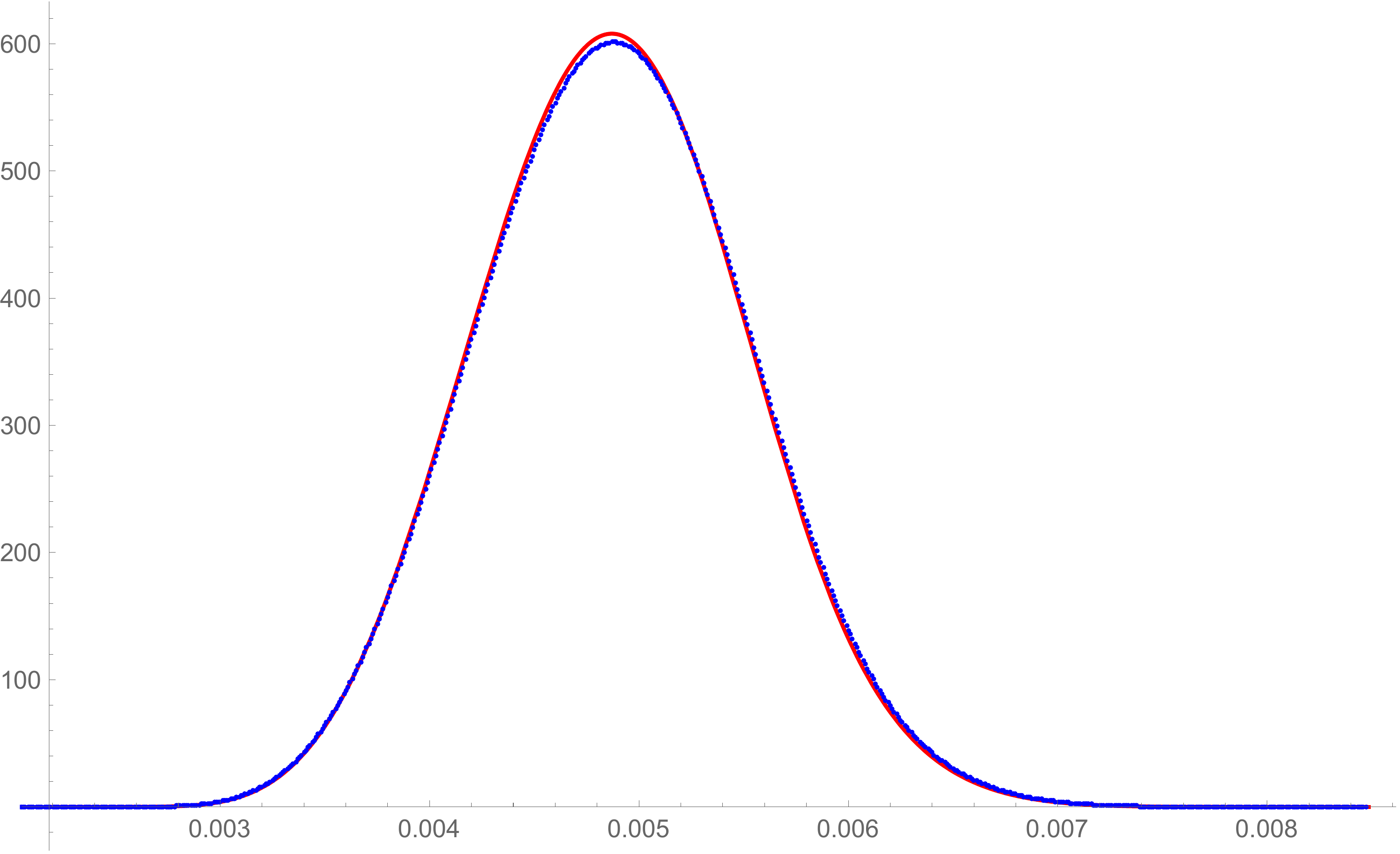}
	                \caption{$n=100{,}000$}
	        \end{subfigure}

\caption{Normalized histogram of the length multiset  $\Le\multi{n}$ (blue) and graph of the length distribution function $F(x)$ (red) for the numerical semigroup
$\<118,150,162,175,182,258,373,387,456\>$.  For each $i \in \NN$ a blue dot occurs above $i/n$ at height equal to the multiplicity of $i$ in $\Le\multi{n}$.}
\label{Figure:Nine}
\end{figure}

\begin{table}\footnotesize
\begin{equation*}
\begin{array}{c|cc||c|cc}
\text{Statistic} & \text{Actual} & \text{Predicted} &\text{Statistic} & \text{Actual} & \text{Predicted} \\
\hline
\Mean \Le\multi{10^5} & 488.30 & 487.30 &\HarMean \Le\multi{10^5} & 479.46  & 478.64\\[3pt]
\Median \Le\multi{10^5} &488 & 486.74& \GeoMean \Le\multi{10^5} &  483.92& 483.00\\[3pt]
\Mode\Le\multi{10^5} &488& 487.08 & \Skew \Le\multi{10^5} & 0.09692 & 0.09699 \\[3pt]
\StDev\Le\multi{10^5} &  65.01& 64.29 & \min / \max \Le \multi{10^5} &\scalebox{0.9}{221/846} & \scalebox{0.9}{219.30/847.46}\\[3pt]
\end{array}
\end{equation*}
\caption{Actual versus predicted statistics (rounded to two decimal places) 
for $\Le\multi{10^5}$ in $S = \<118,150,162,175,182,258,373,387,456\>$.}
\label{Table:Nine}
\end{table}

\section{Complete homogeneous symmetric polynomials}\label{Section:CHS}
There are several unexpected consequences of our work 
to the realm of symmetric functions.  The next theorem provides a probabilistic interpretation
of the complete homogeneous symmetric polynomials and a means to extend their definition
to nonintegral degrees.  From this result we recover a well-known positivity result (Corollary \ref{Corollary:Hunter}).  

The proof of Theorem~\ref{Theorem:CHS} was recently refined~\cite{Bottcher} using the theory of splines.  

\begin{Theorem}\label{Theorem:CHS}
Let $x_1 < x_2< \ldots <x_k$ be real numbers.  Then
\begin{equation}\label{eq:Faaa}
H(x;x_1,x_2,\ldots,x_k) = \frac{k-1}{2} \sum_{r=1}^k \frac{|x_r-x|(x_r-x)^{k-3}}{\prod_{j \neq r}(x_r-x_j)} 
\end{equation}
is a probability distribution on $\RR$ with support $[x_1,x_k]$.  Moreover, for $p\in \NN$,
\begin{equation}\label{eq:Analytic}
h_p (x_1,x_2,\ldots,x_k) = \binom{p+k-1}{p}\int_{x_1}^{x_k} t^p  H(t;x_1,x_2,\ldots,x_k) \, dt .
\end{equation}
\end{Theorem}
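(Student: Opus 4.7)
The plan is to unpack the definition of $H$ into a one-sided, B-spline style expression (which renders the support evident), then extract the moment formula \eqref{eq:Analytic} directly from Theorem \ref{Theorem:Combo}, and finally bootstrap nonnegativity from Theorem \ref{Theorem:Main} by a continuity argument in the parameters $x_1,\ldots,x_k$.

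First, I would rewrite $|x_r - x|(x_r - x)^{k-3}$ using the truncated powers $y_+^{k-2}$. A case check yields $|x_r - x|(x_r - x)^{k-3} = (x_r - x)_+^{k-2} + (-1)^{k-1}(x - x_r)_+^{k-2}$. Coupled with the divided-difference identity $\sum_r (x_r - x)^{k-2}/\prod_{j\neq r}(x_r - x_j) = 0$, valid because the left side is the $(k-1)$st divided difference of a polynomial of degree at most $k-2$, this collapses the symmetric sum into the Curry--Schoenberg form
\[
H(x; x_1, \ldots, x_k) \,=\, (k-1)\sum_{r=1}^k \frac{(x_r - x)_+^{k-2}}{\prod_{j\neq r}(x_r - x_j)}.
\]
The support claim is then immediate: for $x > x_k$ every summand vanishes, and for $x < x_1$ the sum reverts to the vanishing divided difference above.

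Second, I would derive the moment formula by computing the moment generating function of $H$. The substitution $u = x_r - t$ gives $\int_\RR e^{tz}(x_r - t)_+^{k-2}\, dt = (k-2)!\, e^{x_r z}/z^{k-1}$ for $\mathrm{Re}(z) > 0$, and summing against the B-spline weights yields
\[
\int_\RR e^{tz} H(t)\, dt \,=\, \frac{(k-1)!}{z^{k-1}} \sum_{r=1}^k \frac{e^{x_r z}}{\prod_{j \neq r}(x_r - x_j)} \,=\, \sum_{p \geq 0} \frac{(k-1)!\, h_p(x_1, \ldots, x_k)}{(p+k-1)!}\, z^p,
\]
the second equality by Theorem \ref{Theorem:Combo}. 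Since $H$ is compactly supported, its moment generating function is entire, and matching the Taylor series $\sum_p (z^p/p!)\int t^p H(t)\, dt$ coefficient-by-coefficient yields \eqref{eq:Analytic}; the $p = 0$ case gives $\int H = h_0 = 1$.

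The main obstacle is nonnegativity of $H$. I would handle it by a density argument that bootstraps on Theorem \ref{Theorem:Main}. Directly from its definition, $H$ is translation-covariant, $H(x + c; x_1 + c, \ldots, x_k + c) = H(x; x_1, \ldots, x_k)$, and scale-covariant, $H(cx; cx_1, \ldots, cx_k) = c^{-1}H(x; x_1, \ldots, x_k)$ for $c > 0$; both operations preserve the sign of $H$, so we may reduce to the case $0 < x_1 < \cdots < x_k$. For large $N$, take $m_{k+1-i}$ to be the nearest integer to $N/x_i$: these are increasing positive integers with $1/m_{k+1-i} = x_i/N + O(1/N^2)$, and an $O(1/N^2)$ single-coordinate adjustment arranges $\gcd(m_1, \ldots, m_k) = 1$. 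Theorem \ref{Theorem:Main}(a) then guarantees that $H(\,\cdot\,; 1/m_k, \ldots, 1/m_1)$ is a probability density, hence nonnegative; scale-covariance upgrades this to $H(\,\cdot\,; N/m_k, \ldots, N/m_1) \geq 0$. Since $H$ is continuous in the parameters away from the diagonals and $(N/m_k, \ldots, N/m_1) \to (x_1, \ldots, x_k)$, passing to the limit yields $H(x; x_1, \ldots, x_k) \geq 0$ for the given tuple, completing the proof.
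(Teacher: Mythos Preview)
Your proof is correct and rests on the same two pillars as the paper's: nonnegativity is inherited from Theorem~\ref{Theorem:Main} via a continuity/density argument in the parameters, and the moment formula~\eqref{eq:Analytic} is read off from Theorem~\ref{Theorem:Combo} by computing a transform of $H$ and matching Taylor coefficients.

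The packaging differs in a way worth noting. The paper's argument begins by recognizing directly that $H(x;x_1,\ldots,x_k)=\tfrac{1}{m}F\big(\tfrac{x-\tau}{m}\big)$ for a suitable affine change of parameters, so support, nonnegativity, and total mass all follow in one stroke from Theorem~\ref{Theorem:Main}; the moment identity then comes from the \emph{characteristic function}, reusing the Fourier inversion~\eqref{eq:Backward} already carried out in Section~\ref{Section:ProofMain}. You instead pass through the explicit Curry--Schoenberg truncated-power representation (which the paper alludes to but does not derive), obtain the support and compactness directly from that form, and compute the \emph{moment generating function} by the elementary Laplace integral $\int_\RR e^{tz}(x_r-t)_+^{k-2}\,dt=(k-2)!\,e^{x_rz}/z^{k-1}$ rather than by quoting~\eqref{eq:Backward}. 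Your route is therefore a bit more self-contained---it does not reach back into the proof of Theorem~\ref{Theorem:Main} for the Fourier computation---at the cost of the extra divided-difference manipulation; the paper's route is shorter because that work was already done. One small point: your analytic-continuation step (the individual Laplace integrals converge only for $\operatorname{Re}z>0$, but both sides are entire once summed) and the $\gcd$ adjustment are handled at the same level of informality as the corresponding steps in the paper, so no objection there.
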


\begin{proof}
By continuity, we may assume $x_1,x_2,\ldots,x_k \in \QQ$.  In fact, it is not hard to see that we can further assume
\begin{equation*}
x_1 = \tau + \frac{m}{n_k}, \qquad x_2 = \tau  + \frac{m}{n_{k-1}},\ldots, \qquad x_k = \tau+\frac{m}{n_1},
\end{equation*}
in which $\tau \in \ZZ$, $m \in \NN$, and $n_1,n_2,\ldots,n_k \in \NN$ satisfy
$\gcd(n_1,n_2,\ldots,n_k)=1$ and $n_1<n_2<\ldots<n_k$.
Observe that
\begin{align*}
H(x;x_1,x_2,\ldots,x_k)
&=H(x-\tau;x_1-\tau,x_2-\tau,\ldots,x_k-\tau) \\
&= \frac{1}{m} H\bigg( \frac{x-\tau}{m}; \frac{x_1 - \tau}{m}, \frac{x_2- \tau}{m},\ldots, \frac{x_k-\tau}{m} \bigg) \\
&= \frac{1}{m} H\bigg( \frac{x-\tau}{m};  \frac{1}{n_1}, \frac{1}{n_2},\ldots, \frac{1}{n_k} \bigg) \\
&= \frac{1}{m} F\bigg( \frac{x-\tau}{m} \bigg),
\end{align*}
in which $F$ denotes the function from Theorem \ref{Theorem:Main}a; to see this compare \eqref{eq:MidT} with \eqref{eq:Faaa}.
Since $F$ is a probability distribution on $\RR$ supported on $[\frac{1}{n_k},\frac{1}{n_1}]$, we conclude that
$H(x;x_1,x_2,\ldots,x_k)$ is a probability distribution on $\RR$ supported on $[x_1,x_k]$.  
Let $\nu$ denote the corresponding probability
measure, which satisfies $\nu(A) = \int_A H(t;x_1,x_2,\ldots,x_k)\,dt$ for all Borel sets $A \subseteq \RR$, and let 
\begin{equation}\label{eq:Compare1}
\phi_{\nu}(z) = \int_{x_1}^{x_k}  e^{itz}\, d\nu(t) = \sum_{p=0}^{\infty} m_p(\nu) \frac{(iz)^p}{p!}
\end{equation}
denote the corresponding characteristic function.    Then,
\begin{align}
\phi_{\nu}(z) 
&= \big(H(x;x_1,x_2,\ldots,x_k)\big)^{\!\!\widehat{\,\,}}(z) \nonumber\\
&= (k-1)!\sum_{r=1}^k \frac{e^{ix_r z}}{(iz)^{k-1}\prod_{j\neq r}(x_r-x_j)}  && (\text{by \eqref{eq:Backward}}) \nonumber\\
&= (k-1)!\sum_{p=0}^{\infty} \frac{h_p(x_1,x_2,\ldots,x_k)}{(p+k-1)!} (iz)^p && (\text{by Theorem \ref{Theorem:Combo}}) \nonumber\\
&=\sum_{p=0}^{\infty} \binom{p+k-1}{p}^{-1} h_p(x_1,x_2,\ldots,x_k)\frac{(iz)^p}{p!} .\label{eq:Compare2}
\end{align}
For $p\in \NN$, compare \eqref{eq:Compare1} and \eqref{eq:Compare2} and obtain \eqref{eq:Analytic}.
\end{proof}

As an immediate corollary, we obtain a short proof of the positive-definiteness of complete homogeneous symmetric functions of even degree.
This dates back to D.B.~Hunter \cite{Hunter} (a somewhat stronger version was recently obtain by T.~Tao \cite{Tao}).

\begin{Corollary}\label{Corollary:Hunter}
If $x_1,x_2,\ldots,x_k \in \RR \backslash\{0\}$, then $h_{2d}(x_1,x_2,\ldots,x_k) \geq 0$.
\end{Corollary}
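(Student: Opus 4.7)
The plan is to read off the corollary as an immediate consequence of Theorem~\ref{Theorem:CHS}, applied with $p = 2d$. The core observation is that even powers $t^{2d}$ are nonnegative on $\RR$, as are probability densities, so there is really nothing left to do once one has the probabilistic representation of $h_p$ in hand.

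First, since $h_{2d}$ is symmetric in its arguments, I would reorder so that $x_1 \leq x_2 \leq \cdots \leq x_k$. Because $h_{2d}$ is polynomial, hence continuous, in $(x_1,\ldots,x_k)$, it suffices to establish the inequality under the stronger hypothesis $x_1 < x_2 < \cdots < x_k$; the general case then follows by approximating each $x_i$ by nearby distinct reals and passing to the limit. (The nonvanishing hypothesis on the $x_i$ is not needed for the weak inequality $\geq 0$ and plays no role in the argument; it is included to match Hunter's original formulation and becomes relevant only if one wants strict positivity.) With strictly ordered arguments in hand, Theorem~\ref{Theorem:CHS} provides the representation
\[
h_{2d}(x_1, x_2, \ldots, x_k) \,=\, \binom{2d + k - 1}{2d} \int_{x_1}^{x_k} t^{2d}\, H(t; x_1, x_2, \ldots, x_k)\, dt.
\]

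Every factor on the right-hand side is manifestly nonnegative: the binomial coefficient is a positive integer, the density $H(\,\cdot\,; x_1, \ldots, x_k)$ is nonnegative because Theorem~\ref{Theorem:CHS} identifies it as a genuine probability density on $[x_1, x_k]$, and the monomial $t^{2d}$ is nonnegative on all of $\RR$. Hence the integrand is pointwise nonnegative on $[x_1, x_k]$, the integral is nonnegative, and so is $h_{2d}(x_1, \ldots, x_k)$. The main ``obstacle'' is simply that there is no obstacle: the delicate determinantal manipulations used in Hunter's original argument are bypassed entirely by the probabilistic interpretation of $h_p$ as a scaled $p$th moment of an explicit density.
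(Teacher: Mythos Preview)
Your argument is correct and follows essentially the same route as the paper's own proof: reduce by symmetry and continuity to strictly ordered arguments, invoke Theorem~\ref{Theorem:CHS} with $p=2d$, and observe that the integrand is nonnegative. The paper's version is terser and in fact records the strict inequality $>0$ (since $H$ is a probability density with full support on $[x_1,x_k]$ and $t^{2d}$ vanishes only at $t=0$), whereas you stop at $\geq 0$; your remark that the hypothesis $x_i \neq 0$ is not needed for the weak inequality is accurate and a nice observation.
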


\begin{proof}
By symmetry and continuity, we may assume $x_1 < x_2 <  \cdots < x_k$.  Then
\begin{equation*}
h_{2d}(x_1,x_2,\ldots,x_k)
= \binom{2d+k-1}{2d} \int_{x_1}^{x_k} t^{2d} H(t;x_1,x_2,\ldots,x_k)\,dt > 0. \qedhere
\end{equation*}
\end{proof}

In light of Theorem \ref{Theorem:CHS}, we can define
\begin{equation}\label{eq:Z}
h_z(x_1,x_2,\ldots,x_k) := \frac{(z+k-1)\cdots(z+1)}{(k-1)!}
\int_{\RR} t^z H(t;x_1,x_2,\ldots,x_k)\,dt 
\end{equation}
for nonintegral $z$.  Since $H(t;x_1,x_2,\ldots,x_k)$ is a symmetric function of the variables $x_1,x_2,\ldots,x_k$,
it follows that $h_z(x_1,x_2,\ldots,x_k)$ is a symmetric function.  
Moreover, $H(t;x_1,x_2,\ldots,x_k)$ is piecewise polynomial and hence the right-hand side of \eqref{eq:Z} is explicitly computable.
Thus, \eqref{eq:Z} provides a natural notion of complete homogeneous symmetric polynomials of arbitrary degree.
This complements recent work of T.~Tao, who developed a notion of symmetric functions in a fractional number of variables \cite{Tao2}.
Tao's approach, inspired by work of Bennett--Carbery--Tao on the multilinear restriction 
and Kakeya conjectures from harmonic analysis \cite{Bennett},
is also based upon a probabilistic framework.

\begin{Example}
For $k=3$ and distinct $a,b,c \in \RR$, we have
\begin{equation*}
H(x; a,b,c) = \frac{|a-x|}{(b-a)(c-a)} + \frac{|b-x|}{(a-b)(c-b)} + \frac{|c-x|}{(a-c)(b-c)}
\end{equation*}
and
\begin{align*}
h_z(a,b,c)
&=\frac{a^{z+2}(b-c) + b^{z+2}(c-a) + c^{z+2}(a-b) }{(a-b) (a-c) (b-c)}.
\end{align*}
As expected, if we apply \eqref{eq:Z} for $z \in \NN$ we obtain
the complete homogeneous symmetric polynomials
\begin{equation*}
h_0(a,b,c)=1,\quad
h_1(a,b,c) = a+b+c,\quad
h_2(a,b,c) = a^2 + b^2+c^2 + a b + bc + ca,
\end{equation*}
and so forth.
For $a,b,c > 0$, we obtain curious symmetric functions such as
\begin{align*}
h_{\frac{1}{2}}(a,b,c) 
&= \frac{ a^{\frac{5}{2}}(b-c)   +b^{\frac{5}{2}}(c-a) + c^{\frac{5}{2}}(a-b) }{(a-b) (a-c) (b-c)} ,\\
h_{-\frac{1}{2}}(a,b,c) 
&= \frac{\sqrt{a} \sqrt{b}+\sqrt{a} \sqrt{c}+\sqrt{b} \sqrt{c}}{(\sqrt{a}+\sqrt{b}) (\sqrt{a}+\sqrt{c}) (\sqrt{b}+\sqrt{c})} ,\\
h_{-\frac{3}{2}}(a,b,c)
&=-\frac{1}{(\sqrt{a}+\sqrt{b}) (\sqrt{a}+\sqrt{c}) (\sqrt{b}+\sqrt{c})} ,\\
h_{-\frac{5}{2}}(a,b,c)
&= \frac{\frac{a-b}{\sqrt{c}}+\frac{b-c}{\sqrt{a}}+\frac{c-a}{\sqrt{b}}}{(a-b) (a-c) (b-c)} ,\\
h_{-3}(a,b,c)
&=\frac{1}{abc} ,\\
h_{-4}(a,b,c)
&= \frac{a b+a c+b c}{a^2 b^2 c^2},
\end{align*}
We also note that $h_{-1}(a,b,c) = h_{-2}(a,b,c) = 0$.
\end{Example}
For negative and positive rational values of $z$, one can explicitly describe $h_z(x_1,x_2,\ldots,x_k)$ in terms of Schur functions, 
though we do not wish to be drawn too far afield here.  We intend to take this subject up in a subsequent publication.

\section{Proof of Theorem \ref{Theorem:Moment}}\label{Section:ProofMoment}
The first part of the proof concerns a certain two-variable generating function (Subsection \ref{Subsection:Generating}).  Next comes a lengthy residue computation
(Subsection \ref{Subsection:Residue}).  A~few power series computations complete the proof (Subsection \ref{Subsection:Complete}).

\subsection{Generating function}\label{Subsection:Generating}
Fix $S = \<n_1,n_2,\ldots,n_k\>$ with $\gcd(n_1,n_2,\ldots,n_k)=1$  and consider the generating function
\begin{align}
g(z,w)
& := \prod_{i=1}^k \frac{1}{1 - wz^{n_i}}\label{eq:Gzw} \\
&= \prod_{i=1}^k (1 + wz^{n_i} + w^2z^{2n_i} + \cdots) \nonumber \\
&= \sum_{a_1,a_2,\ldots,a_k \geq 0} w^{a_1 + a_2 + \cdots + a_k}\, z^{a_1 n_1 + a_2 n_2 + \cdots + a_k n_k} \nonumber \\
&= \sum_{n=0}^\infty z^n \sum_{\ell=0}^\infty (\text{\# of factorizations of $n$ of length $\ell$}) w^{\ell} \nonumber \\  
&= \sum_{n=0}^\infty z^n \sum_{\ell \in \Le\multi{n}} w^{\ell}. \nonumber
\end{align}
Then
\begin{equation*}
\left( \! w \frac{\partial}{\partial w} \! \right)^{\!\!p} \!\! g(z,w) 
=\sum_{n=0}^\infty z^n \sum_{\ell \in \Le[n]}  w^{\ell} \ell^p
\end{equation*}
and hence 
\begin{equation*}
\Lambda_p(n):=\sum_{\ell \in \Le\multi{n}} \ell^p
\end{equation*}
is the coefficient of $z^n$ in the series expansion of
\begin{equation}\label{eq:Gdef}
G(z) := \bigg( \! w \frac{\partial}{\partial w} \! \bigg)^p \!\! g(z,w) \Bigg |_{w=1}.
\end{equation}
To make use of this we require the following lemma.

\begin{Lemma}
For $p\in \NN$,
\begin{equation}\label{eq:partialidentity}
\frac{\partial^p}{\partial w^p} g(z,w)
= p! \left( \prod_{b=1}^k \frac{1}{1 - wz^{n_b}} \right) h_p \left( \frac{z^{n_1}}{1 - wz^{n_1}}, \cdots, \frac{z^{n_k}}{1 - wz^{n_k}} \right).
\end{equation}
\end{Lemma}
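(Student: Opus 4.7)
The key observation is that $g(z,w)$ is a product of $k$ functions of $w$, each of which has a particularly clean form for its higher $w$-derivatives. The plan is to apply the multivariate Leibniz rule, compute the resulting individual derivatives explicitly, and recognize the residual sum as a complete homogeneous symmetric polynomial.

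Set $f_i(w) = (1-wz^{n_i})^{-1}$ and $u_i = u_i(z,w) = z^{n_i}/(1-wz^{n_i})$, so that $g(z,w) = \prod_{i=1}^k f_i(w)$. A direct computation (by induction on $m$) gives
\begin{equation*}
\frac{\partial^m}{\partial w^m} f_i(w) = \frac{m!\, z^{n_i m}}{(1-wz^{n_i})^{m+1}},
\quad \text{hence} \quad
\frac{1}{f_i(w)} \cdot \frac{\partial^m f_i}{\partial w^m} = m!\, u_i^m.
\end{equation*}
I would record this as a short calculation before the main argument.

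Next, I apply the generalized Leibniz rule to the product $g = f_1 f_2 \cdots f_k$:
\begin{equation*}
\frac{\partial^p g}{\partial w^p}
= \sum_{\substack{m_1,\ldots,m_k \geq 0 \\ m_1 + \cdots + m_k = p}} \binom{p}{m_1,\ldots,m_k} \prod_{i=1}^k \frac{\partial^{m_i} f_i}{\partial w^{m_i}}
= g \sum_{\substack{m_1+\cdots+m_k = p \\ m_i \geq 0}} \frac{p!}{m_1! \cdots m_k!} \prod_{i=1}^k m_i!\, u_i^{m_i}.
\end{equation*}
The factorials in the multinomial coefficient cancel exactly against the factorials from the derivatives, leaving
\begin{equation*}
\frac{\partial^p g}{\partial w^p} = p!\, g(z,w) \sum_{\substack{m_1+\cdots+m_k = p \\ m_i \geq 0}} u_1^{m_1} \cdots u_k^{m_k}.
\end{equation*}
Finally, the remaining sum is precisely $h_p(u_1,\ldots,u_k)$, since every nondecreasing $p$-tuple $1 \leq \alpha_1 \leq \cdots \leq \alpha_p \leq k$ corresponds bijectively to an exponent vector $(m_1,\ldots,m_k)$ with $\sum m_i = p$. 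Substituting $u_i = z^{n_i}/(1-wz^{n_i})$ yields the claimed identity.

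There is no real obstacle here: the argument is a one-line application of Leibniz's rule once the identity $f_i^{(m)}/f_i = m!\, u_i^m$ is in hand. The only subtle point is the clean cancellation of factorials, which is what makes the formula so elegant and accounts for the single factor of $p!$ on the right-hand side.
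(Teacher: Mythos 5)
Your proof is correct, and it takes a genuinely different route from the paper's. The paper proceeds by induction on $p$: it differentiates the claimed formula for $\partial^p g/\partial w^p$ once more, which produces the two terms $\bigl(\sum_i u_i\bigr)h_p(u)$ and $\sum_i u_i^2\,\partial h_p/\partial x_i(u)$ (using $\partial u_i/\partial w = u_i^2$), and then closes the induction by a combinatorial counting argument showing that this sum equals $(p+1)h_{p+1}(u)$ — a step the paper justifies only by the remark that one "counts how many times each term appears." Your argument instead computes all $p$ derivatives in one shot via the generalized Leibniz rule, using the closed form $f_i^{(m)}/f_i = m!\,u_i^m$, and the factorial cancellation makes the sum over compositions $m_1+\cdots+m_k=p$ collapse immediately to $h_p(u_1,\ldots,u_k)$ read as the sum of all degree-$p$ monomials. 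Your route is arguably cleaner: it replaces the paper's induction and its somewhat terse counting step with a single standard identity, at the cost of invoking the multivariate Leibniz formula rather than only single differentiation. Both proofs ultimately rest on the same structural fact — that differentiating $(1-wz^{n_i})^{-1}$ in $w$ produces powers of $u_i$ — so the approaches are complementary rather than conflicting.
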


\begin{proof}
We proceed by induction.
The base case $p=0$ is \eqref{eq:Gzw}.
For the inductive step, suppose that \eqref{eq:partialidentity} holds for some $p \in \NN$.  Then 
\begin{align*}
\frac{1}{p!} \frac{\partial^{p+1}}{\partial w^{p+1}} g(z,w) 
&= \frac{\partial}{\partial w} \left( \frac{1}{p!}\frac{\partial^{p}}{\partial w^{p}}  g(z,w) \right)  \\
&= \frac{\partial}{\partial w} \left( \prod_{b=1}^k \frac{1}{1 - wz^{n_b}} \right) h_p \left( \frac{z^{n_1}}{1 - wz^{n_1}}, \cdots, \frac{z^{n_k}}{1 - wz^{n_k}} \right) \\
&= \left( \prod_{b=1}^k \frac{1}{1 - wz^{n_b}} \right)\left(\sum_{i = 1}^k  \frac{z^{n_i}}{1 - wz^{n_i}}   \right)
h_p \left( \frac{z^{n_1}}{1 - wz^{n_1}}, \cdots, \frac{z^{n_k}}{1 - wz^{n_k}} \right)  \\
&\qquad\qquad+ \left( \prod_{b=1}^k \frac{1}{1 - wz^{n_b}} \right) \frac{\partial}{\partial w} h_p \left( \frac{z^{n_1}}{1 - wz^{n_1}}, \cdots, \frac{z^{n_k}}{1 - wz^{n_k}} \right) \\
&= (p+1) \left( \prod_{b=1}^k \frac{1}{1 - wz^{n_b}} \right) h_{p+1} \left( \frac{z^{n_1}}{1 - wz^{n_1}}, \cdots, \frac{z^{n_k}}{1 - wz^{n_k}} \right).
\end{align*}
The final equality follows from counting how many times each term appears when the 
line before it is expanded.
\end{proof}

The \emph{Stirling number of the second kind}, denoted $\stirling{n}{i}$, counts the number of partitions of 
$[n] := \{1, 2, \ldots, n\}$ into $i$ nonempty subsets. These numbers satisfy
\begin{equation*}
\stirling{n+1}{i} = i \stirling{n}{i} + \stirling{n}{i-1}
\end{equation*}
and
\begin{equation}\label{eq:stirlingidentity}
\bigg(x \frac{d}{dx}\bigg)^{\!p}
= \sum_{i=0}^p \stirling{p}{i} x^i \frac{d^i}{dx^i},
\end{equation}
which holds for $p \in \NN$ \cite{Carlitz1, Carlitz2, Toscano}.

From \eqref{eq:Gdef}, and then \eqref{eq:stirlingidentity} and \eqref{eq:partialidentity}, we obtain
\begin{align}
G(z)
&=  \bigg( \! w \frac{\partial}{\partial w} \! \bigg)^p \!\! g(z,w)  \Bigg |_{w=1} \nonumber\\
&= \sum_{i=0}^p \stirling{p}{i} \, w^i \frac{\partial^i g}{\partial w^i} \, \Bigg |_{w=1} \nonumber \\
&= \sum_{i=0}^p \stirling{p}{i} \, i! \, w^i \bigg( \prod_{j=1}^k \frac{1}{1 - wz^{n_j}} \bigg) h_i \bigg( \frac{z^{n_1}}{1 - wz^{n_1}}, \cdots, \frac{z^{n_k}}{1 - wz^{n_k}} \bigg) \Bigg |_{w=1} \nonumber \\
&= \sum_{i=0}^p \stirling{p}{i} \, i!\, \left( \prod_{j=1}^k \frac{1}{1 - z^{n_j}}  \right) h_i \bigg( \frac{z^{n_1}}{1 - z^{n_1}}, \cdots, \frac{z^{n_k}}{1 - z^{n_k}} \bigg).\label{eq:FH}
\end{align}
Thus, $G(z)$ is a rational function in $z$, all of whose poles are certain $L$th roots of unity, in which 
\begin{equation*}
L:=\lcm(n_1,n_2,\ldots,n_k).
\end{equation*}
Each $1 - z^{n_i}$ factors as a product of $n_i$ distinct linear factors, one of which is $1 - z$.
Consequently, $1$ is a pole of $G(z)$ of order $k + p$; this arises from the summand corresponding to 
$i=p$.  Moreover, $k+p$ 
is the maximum possible order for a pole of $G(z)$, and $1$ is the unique pole of this order.
Indeed, $\gcd(n_1, n_2,\ldots,n_k) = 1$ ensures that the only common root of
$1-z^{n_1}, 1-z^{n_2},\ldots, 1-z^{n_k}$ is $1$.

Thus, $\Lambda_p(n)$ is a complex linear combination of terms of the form
\begin{equation*}
n^{r-1} \omega^n,\,\,
n^{r-2} \omega^n,\ldots,\,\,
\omega^n,
\end{equation*}
in which $\omega$ is a pole of $G(z)$ of order at most $r$ (see \cite[Ch.~1]{continuousdiscretely} for an overview of this method).  The unique pole of $G(z)$ of highest order is $1$, which has order $k+p$.
Thus, there exist periodic functions $a_0, a_1, \ldots, a_{k+p-1}: \NN \to \CC$ with periods dividing $L$ such that 
\begin{equation*}
\Lambda_p(n) = a_{k+p-1}(n) n^{k+p-1} + a_{k+p-2}(n)n^{k+p-2} + \cdots + a_1(n)n+a_0(n).
\end{equation*}
This establishes the desired quasipolynomial representation.  It remains to show 
\begin{equation*}
a_{k+p-1}(n) = \frac{p!}{(k + p - 1)! (n_1 n_2 \cdots n_k)} h_p \bigg( \frac{1}{n_1}, \frac{1}{n_2}, \ldots, \frac{1}{n_k} \bigg) 
\end{equation*}
and that the periodic functions $a_i(n)$ assume only rational values.

\subsection{A residue computation}\label{Subsection:Residue}
Since $G(z)$ has a pole of order $k+p$ at $1$, we have
\begin{equation}\label{eq:Fcg}
G(z) = \frac{C}{(1 - z)^{k+p}} + u(z)
\end{equation}
for some constant $C$ and some rational function $u$, all of whose poles are
$L$th roots of unity with order at most $k + p - 1$.  In particular,
\begin{equation*}
u(z) = \sum_{n=0}^\infty w(n) z^n
\end{equation*}
for some quasipolynomial $w(n)$ of degree at most $k + p - 2$ and period dividing~$L$. 

The only summand in \eqref{eq:FH} that has a pole at $1$ of order $k+p$ is the
term that corresponds to $i=p$.  
The summands in \eqref{eq:FH} with $0 \leq i \leq p-1$ satisfy
\begin{equation*}
\lim_{z \to 1} (1 - z)^{k+p}
\underbrace{
\left( \prod_{j=1}^k \frac{1}{1 - z^{n_j}}  \right) h_i \bigg( \frac{z^{n_1}}{1 - z^{n_1}}, \cdots, \frac{z^{n_k}}{1 - z^{n_k}} \bigg)
}_{\text{has a pole at $z=1$ of order $k+i \leq k+p-1$}}
= 0.
\end{equation*}
Consequently,
\begin{align}
C
&= \lim_{z \to 1} (1-z)^{k+p} G(z) \nonumber \\
&= \lim_{z \to 1} \sum_{i=0}^p \stirling{p}{i} \, i! \, (1 - z)^{k+p} \left( \prod_{j=1}^k \frac{1}{1 - z^{n_j}} \right) h_i \bigg( \frac{z^{n_1}}{1 - z^{n_1}}, \cdots, \frac{z^{n_k}}{1 - z^{n_k}} \bigg) \nonumber \\
&= p!\lim_{z \to 1} (1 - z)^{k+p} \left( \prod_{j=1}^k \frac{1}{1 - z^{n_j}} \right) h_p \bigg( \frac{z^{n_1}}{1 - z^{n_1}}, \cdots, \frac{z^{n_k}}{1 - z^{n_k}} \bigg) \nonumber \\
&= p! \lim_{z \to 1} \left( \prod_{j=1}^k \frac{1}{1 + \cdots + z^{n_j-1}} \right) h_p \bigg( \frac{z^{n_1}}{1 + \cdots + z^{n_1-1}}, \cdots, \frac{z^{n_k}}{1 + \cdots + z^{n_k-1}} \bigg) \nonumber \\
&= \frac{p!}{n_1 n_2 \cdots n_k} h_p \bigg( \frac{1}{n_1}, \frac{1}{n_2}, \ldots, \frac{1}{n_k} \bigg).
\label{eq:c}
\end{align}

\subsection{Completing the proof}\label{Subsection:Complete}
Observe that
\begin{align*}
\frac{1}{(1 - z)^{k+p}}
&= \sum_{n=0}^\infty \binom{n + k + p - 1}{k + p - 1} z^n \\
&= \sum_{n=0}^\infty \frac{(n + k + p - 1) \cdots (n + 1)}{(k + p - 1)!} z^n \\ 
&= \frac{1}{(k + p - 1)!} \sum_{n=0}^\infty \big(n^{k+p-1} + v(n)\big) z^n,
\end{align*}
in which $v(n)$ is a quasipolynomial of degree $k + p - 2$ with integer coefficients.
Together with~\eqref{eq:Fcg} and~\eqref{eq:c}, we obtain
\begin{align*}
G(z)
&= \frac{p!}{n_1 \cdots n_k} h_p \bigg( \frac{1}{n_1}, \frac{1}{n_2}, \ldots, \frac{1}{n_k} \bigg) \cdot \frac{1}{(1 - z)^{k+p}} + u(z) \\ 
&= \frac{p!}{(k + p - 1)! n_1 \cdots n_k} h_p \bigg( \frac{1}{n_1}, \ldots, \frac{1}{n_k} \bigg) \sum_{n=0}^\infty (n^{k+p-1} + v(n)) z^n + \sum_{n=0}^\infty w(n)z^n.
\end{align*}
Thus,
\begin{equation*}
\Lambda_p (n) = \frac{p!}{(k + p - 1)! n_1 n_2 \cdots n_k} h_p \bigg( \frac{1}{n_1}, \frac{1}{n_2}, \ldots, \frac{1}{n_k} \bigg) n^{k+p-1} + q(n),
\end{equation*}
in which $q(n)$ is a quasipolynomial of degree at most $k+p-2$ whose coefficients have periods dividing $L$.  Additionally, since $v(n)$ and $w(n)$ both have rational coefficients, so must $q(n)$.  This completes the proof.  \qed

\section{Proof of Theorem \ref{Theorem:Combo}}\label{Section:Combo}
We wish to prove the exponential generating function identity
\begin{equation}\label{eq:Mo}
\sum_{p=0}^{\infty} \frac{h_p(x_1,x_2,\ldots,x_k)}{(p+k-1)!} z^{p+k-1}
= \sum_{r=1}^k \frac{e^{x_r z}}{\prod_{j\neq r}(x_r-x_j)},
\end{equation}
valid for $z \in \CC$.  We first show that the power series on the left-hand side of \eqref{eq:Mo}
has an infinite radius of convergence (Subsection \ref{Subsection:RoC}).
Then we reduce \eqref{eq:Mo} to an identity that links 
complete homogeneous symmetric polynomials to the determinants of certain
Vandermonde-like matrices (Subsection \ref{Subsection:Vandermonde}).
A brief excursion into algebraic combinatorics (Subsection \ref{Subsection:AlgCom})
finishes off the proof.

\subsection{Radius of convergence}\label{Subsection:RoC}
Fix distinct $x_1,x_2,\ldots,x_k \in \CC \backslash\{0\}$.
We claim that the radius of convergence of the power series
\begin{equation}\label{eq:RoCs}
\sum_{p=0}^{\infty} \frac{h_p(x_1,x_2,\ldots,x_k)}{(p+k-1)!} z^{p+k-1}
\end{equation}
is infinite.  This ensures that \eqref{eq:Mo} is an equality of entire functions.
The ordinary generating function for the complete homogeneous symmetric polynomials is
\begin{equation}\label{eq:CHSPgenerating}
\sum_{p=0}^\infty h_p(x_1, x_2, \ldots, x_k)z^p
=\prod_{i=1}^k \frac{1}{1 - x_i z};
\end{equation}
see~\cite{ec1}.   
The radius of convergence of the preceding power series is
the distance from $0$ to the closest pole $1/x_1,1/x_2,\ldots,1/x_k$.
Consequently, the Cauchy--Hadamard formula \cite[p.~55]{Sarason} yields
\begin{equation*}
 \limsup_{p\to\infty} |h_p(x_1, x_2, \ldots, x_k)|^{\frac{1}{p}} = \operatorname{max}\{ |x_1|,|x_2|,\ldots,|x_k|\}.
\end{equation*}
Since
\begin{equation*}
\lim_{p\to\infty} \big( (p+k-1)! \big)^{\frac{1}{p}}
\geq \lim_{p\to\infty} ( p!)^{\frac{1}{p}}
\geq \lim_{p\to\infty} \Big(\big( \tfrac{p}{3}\big)^{\frac{p}{3}}\Big)^{\frac{1}{p}} 
= \lim_{p\to\infty} \left(\frac{p}{3}\right)^{\frac{1}{3}}
=\infty,
\end{equation*}
a second appeal to the Cauchy--Hadamard formula tells us that the  radius of convergence $R$ of \eqref{eq:RoCs}
satisfies
\begin{equation*}
\frac{1}{R} 
= \limsup_{p\to\infty} \left( \frac{h_p(x_1,x_2,\ldots,x_k)}{(p+k-1)!} \right)^{\frac{1}{p}}
= \frac{\operatorname{max}\{ |x_1|,|x_2|,\ldots,|x_k|\}}{\displaystyle\lim_{p\to\infty} ( (p+k-1)!)^{\frac{1}{p}}} = 0.
\end{equation*}
Thus, the radius of convergence of \eqref{eq:RoCs} is infinite.

\subsection{A Vandermonde-like determinant}\label{Subsection:Vandermonde}
The determinant of the $k\times k$ \emph{Vandermonde matrix}
\begin{equation*}
V(x_1,x_2,\ldots,x_k) 
:= 
\begin{bmatrix} 
1 & x_1 & x_1^2 & \cdots & x_1^{k-1} \\[3pt] 
1 & x_2 & x_2^2 & \cdots & x_2^{k-1} \\[3pt] 
\vdots & \vdots & \vdots & \ddots & \vdots \\[3pt]
1 & x_k & x_k^2 & \cdots & x_k^{k-1} 
\end{bmatrix}
\end{equation*}
is 
\begin{equation*}
\det V(x_1,x_2,\ldots,x_k) = \prod_{1 \leq i<j \leq n} (x_j-x_i);
\end{equation*}
see \cite[p.~37]{HJ}.  
In what follows, $V(x_1,\ldots,\widehat{x_r},\ldots,x_k)$
denotes the $(k-1)\times (k-1)$ Vandermonde matrix obtained from $V(x_1,x_2,\ldots,x_k)$ by removing 
$x_r$ (do not confuse the carat with the Fourier transform).  Cofactor expansion and 
the linearity of the determinant in the final column of a matrix reveals that
\begin{align*}
&\det V(x_1,x_2,\ldots,x_k)\sum_{r=1}^k \frac{e^{x_r z}}{\prod_{j\neq r}(x_r-x_j)}  \\
&\qquad = \sum_{r=1}^k (-1)^{k-r} \dfrac{\det V(x_1,x_2,\ldots,x_k)}{\big(\prod_{j<r}(x_r-x_j) \big)\big(\prod_{j>r}(x_j-x_r)\big)}e^{x_rz} \\
&\qquad = \sum_{r=1}^k (-1)^{k-r} \det V(x_1,\ldots,\widehat{x_r},\ldots,x_k)
e^{x_rz} \\
&\qquad = \det \small
\begin{bmatrix} 
1 & x_1 & x_1^2 & \cdots & x_1^{k-2} & e^{x_1z} \\[3pt] 
1 & x_2 & x_2^2 & \cdots & x_2^{k-2} & e^{x_2z} \\[3pt] 
\vdots & \vdots & \vdots & \ddots & \vdots & \vdots \\[3pt]
1 & x_k & x_k^2 & \cdots & x_k^{k-2} & e^{x_kz} \\  
\end{bmatrix} \\
&\qquad= \sum_{i=0}^{\infty} \frac{z^i}{i!}\det\small
\begin{bmatrix} 
1 & x_1 & x_1^2 & \cdots & x_1^{k-2} & x_1^i  \\[3pt] 
1 & x_2 & x_2^2 & \cdots & x_2^{k-2} & x_2^i  \\[3pt] 
\vdots & \vdots & \vdots & \ddots & \vdots & \vdots \\[3pt]
1 & x_k & x_k^2 & \cdots & x_k^{k-2} & x_k^i  \\  
\end{bmatrix}
\\
&\qquad= \sum_{p=0}^{\infty}\frac{z^{p+k-1}}{(p+k-1)!} \det\small
\begin{bmatrix} 
1 & x_1 & x_1^2 & \cdots & x_1^{k-2} & x_1^{p+k-1}  \\[3pt] 
1 & x_2 & x_2^2 & \cdots & x_2^{k-2} & x_2^{p+k-1}  \\[3pt] 
\vdots & \vdots & \vdots & \ddots & \vdots & \vdots \\[3pt]
1 & x_k & x_k^2 & \cdots & x_k^{k-2} & x_k^{p+k-1}  \\  
\end{bmatrix}.
\end{align*}
We reindexed the final sum to reflect the fact that the matrices in the second-to-last line
have repeated columns for $i=0,1,\ldots,k-2$ and hence have vanishing determinant.
To establish \eqref{eq:Mo}, and hence Theorem \ref{Theorem:Combo}, it suffices to show that
\begin{equation}\label{eq:Vhp}
\det\small
\begin{bmatrix} 
1 & x_1 & x_1^2 & \cdots & x_1^{k-2} & x_1^{p+k-1}  \\[3pt] 
1 & x_2 & x_2^2 & \cdots & x_2^{k-2} & x_2^{p+k-1}  \\[3pt] 
\vdots & \vdots & \vdots & \ddots & \vdots & \vdots \\[3pt]
1 & x_k & x_k^2 & \cdots & x_k^{k-2} & x_k^{p+k-1}  \\  
\end{bmatrix}\normalsize
= h_p(x_1,\ldots,x_k) \det V(x_1,\ldots,x_k) .
\end{equation}

\subsection{Some algebraic combinatorics}\label{Subsection:AlgCom}
To establish \eqref{eq:Vhp} requires a small amount of algebraic combinatorics.  We briefly review the notation and results necessary for this purpose; the interested reader may consult \cite{Stanley2} for complete details.

Let $\lambda :=(\lambda_1,\lambda_2,\ldots,\lambda_k)$ denote the integer partition
\begin{equation*}
p = \lambda_1 + \lambda_2 + \cdots + \lambda_k,
\end{equation*}
in which  $\lambda_1 \geq \lambda_2 \geq \cdots \geq \lambda_k \geq 0$.  To such a partition we 
associate the polynomial
\begin{equation*}
 a_{(\lambda_1+k-1, \lambda_2+k-2, \ldots , \lambda_k)} (x_1, x_2, \ldots , x_k) 
 :=\det  \small
 \begin{bmatrix} 
 x_1^{\lambda_1+k-1} & x_2^{\lambda_1+k-1} & \ldots & x_k^{\lambda_1+k-1} \\[3pt]
 x_1^{\lambda_2+k-2} & x_2^{\lambda_2+k-2} & \ldots & x_k^{\lambda_2+k-2} \\[3pt]
 \vdots & \vdots & \ddots & \vdots \\[3pt]
 x_1^{\lambda_k} & x_2^{\lambda_k} & \ldots & x_k^{\lambda_k} 
 \end{bmatrix} ,
\end{equation*}
which is an alternating function of the variables $x_1,x_2,\ldots,x_k$
(interchanging any two of the variables changes the sign of the determinant).
As an alternating polynomial, the preceding 
is divisible by
\begin{align*}
 a_{(k-1, k-2, \ldots , 0)} (x_1, x_2, \ldots , x_k) 
& = \det \small
 \begin{bmatrix} 
 x_1^{k-1} & x_2^{k-1} & \ldots & x_n^{k-1} \\[3pt]
 x_1^{k-2} & x_2^{k-2} & \ldots & x_n^{k-2} \\[3pt]
 \vdots & \vdots & \ddots & \vdots \\[3pt]
 1 & 1 & \ldots & 1 
 \end{bmatrix}  
 \\
& = (-1)^{\binom{n}{2}} \det V(x_1,x_2,\ldots,x_k).
\end{align*}

The \emph{Schur polynomial} in the variables $x_1,x_2,\ldots,x_k$ corresponding to the partition $\lambda$ is
\begin{equation*}
 s_{\lambda} (x_1, x_2, \ldots , x_k) 
 :=\frac{ a_{(\lambda_1+k-1, \lambda_2+k-2, \ldots , \lambda_k+0)} (x_1, x_2, \ldots , x_k)}{a_{(k-1,k-2, \ldots , 0)} (x_1, x_2, \ldots , x_k) };
\end{equation*}
this is Jacobi's bialternant identity (which is itself a special case of the famed
Weyl character formula). 
We now prove \eqref{eq:Vhp}.  If we consider the partition
\begin{equation*}
\lambda = (p,\underbrace{0,0,\ldots,0}_{\text{$k-1$ zeros}}).
\end{equation*}
then it is well known that $s_{\lambda}(x_1,x_2,\ldots,x_k) = h_p(x_1,x_2,\ldots,x_k)$,
and hence
\begin{align*}
&\det\small
\begin{bmatrix} 
1 & x_1 & x_1^2 & \cdots & x_1^{k-2} & x_1^{p+k-1}  \\[3pt] 
1 & x_2 & x_2^2 & \cdots & x_2^{k-2} & x_2^{p+k-1}  \\[3pt] 
\vdots & \vdots & \vdots & \ddots & \vdots & \vdots \\[3pt]
1 & x_k & x_k^2 & \cdots & x_k^{k-2} & x_k^{p+k-1}  \\  
\end{bmatrix}
\\
&\qquad= (-1)^{\binom{n}{2}} \det \small
\begin{bmatrix} 
 x_1^{p+k-1} & x_2^{p+k-1} & \ldots & x_k^{p+k-1} \\[3pt]
 x_1^{k-2} & x_2^{k-2} & \ldots & x_k^{k-2} \\[3pt]
 \vdots & \vdots & \ddots & \vdots \\[3pt]
 1 & 1 & \ldots & 1 
 \end{bmatrix} \\
&\qquad =(-1)^{\binom{n}{2}}  a_{(p+k-1, k-2, \ldots , 0)} (x_1, x_2, \ldots , x_k) \\
&\qquad =(-1)^{\binom{n}{2}}  s_{\lambda}(x_1,x_2,\ldots,x_k) 
a_{(k-1,k-2,\ldots,0)}(x_1,x_2,\ldots,x_k) \\
&\qquad=  h_p(x_1,x_2,\ldots,x_k) \det V(x_1,x_2,\ldots,x_k)  .
\end{align*}
This establishes \eqref{eq:Vhp} and concludes the proof of Theorem \ref{Theorem:Combo}.\qed

\section{Proof of Theorem \ref{Theorem:Main}}\label{Section:ProofMain}
The proof of Theorem~\ref{Theorem:Main} uses a few tools, such as weak convergence and Fourier transforms of measures, 
that are not standard in the study of numerical semigroups.
Since these ideas are not required to apply Theorem~\ref{Theorem:Main} and are not used elsewhere in the paper, we introduce the
required concepts as needed and make no attempt to 
state definitions and lemmas in the greatest possible generality.  

We begin with the necessary background on moments of probability measures 
(Subsection~\ref{Subsection:MM}), Fourier transforms of measures (Subsection~\ref{Subsection:Fourier}),
and characteristic functions (Subsection~\ref{Subsection:Characteristic}).
We then prove a power series convergence lemma (Subsection~\ref{Subsection:Power})
to set up our use of characteristic functions.  We introduce a family of singular measures
(Subsection~\ref{Subsection:Nun}) that converge weakly to the desired probability measure.
This is established using the method of characteristic functions and L\'evy's continuity theorem
(Subsection~\ref{Subsection:Phi}).  We wrap things up with a dose of Fourier inversion and some
detailed computations (Subsection~\ref{Subsection:Completing}).  

\subsection{Measures and moments}\label{Subsection:MM}
A \emph{Borel measure} is a measure defined on the Borel $\sigma$-algebra, the $\sigma$-algebra of subsets of $\RR$
generated by the open sets.  Every subset of $\RR$ we consider in this paper is a Borel set.
Let $\nu$ be a \emph{probability measure} on $[0,1]$; that is, $\nu$ is a Borel
measure on $[0,1]$
such that $\nu([0,1]) =1$ and $\nu(A) \geq 0$ for every Borel set $A \subseteq [0,1]$.
For $p \in \NN$, the $p$th \emph{moment} of $\nu$ is
\begin{equation*}
m_p(\nu) = \int_0^1 t^p \,d\nu(t).
\end{equation*}
The moments of $\nu$ are uniformly bounded since
\begin{equation}\label{eq:UniformBound}
0\leq m_p(\nu) \leq \int_0^1d\nu(t) = \nu\big([0,1]\big) = 1.
\end{equation}
A probability measure on $[0,1]$ is completely determined by its moments \cite[Thm.~30.1]{Billingsley}.

Let $\nu_n$ be a sequence of probability measures on $[0,1]$.
Then $\nu_n$ \emph{converges weakly} to a measure $\nu$ on $[0,1]$, denoted by
$\nu_n \to \nu$, if any of the following equivalent conditions hold \cite[Thm.~25.8, Thm.~30.2]{Billingsley}:
\begin{enumerate}
\item $\displaystyle\lim_{n\to\infty} \int_0^1 f(t)\,d\nu_n(t) = \int_0^1 f(t)\,d\nu(t)$ 
for every continuous function on $[0,1]$. 
\item $m_p(\nu_n)\to m_p(\nu)$ for all $p\in \NN$.
\item $\nu_n(A) \to \nu(A)$ for every Borel set $A \subseteq [0,1]$ 
for which $\nu(\partial A) = 0$; that is, $\nu$ places no mass on the boundary of $A$.
\end{enumerate}
The equivalence of (a) and (b) follows from the Weierstrass approximation theorem: every continuous
function on $[0,1]$ is uniformly approximable by polynomials.
The weak limit of a sequence of probability measures is a probability measure and the limit measure is unique \cite[p.~336-7]{Billingsley}.

\subsection{The Fourier transform}\label{Subsection:Fourier}
The \emph{Fourier transform} of a probability measure $\nu$ on $[0,1]$ is
\begin{equation}\label{eq:Alternate}
\widehat\nu(z) := \int_0^1 e^{i t z}\,d\nu(t).
\end{equation}
This may differ in appearance from what the reader is accustomed to.
Normally one integrates over $\RR$ in \eqref{eq:Alternate}, but that is unnecessary here because 
$\nu$ is supported on $[0,1]$.  We adhere to the positive sign in the exponent of the integrand in \eqref{eq:Alternate},
which is standard in probability theory \cite[Sect.~26]{Billingsley}.  Consequently,
the reader should be aware of potential sign discrepancies between what follows
and formulas from their favored sources.

The \emph{inverse Fourier transform} of a suitable $f:\RR\to\CC$ is
\begin{equation*}
\widecheck{f}(x) := \frac{1}{2\pi}\int_{\RR}f(t)e^{-i x t}\,dt.
\end{equation*}
With much additional work, the inverse Fourier transform can be defined
on distributions (``generalized functions'').
A friendly introduction to Fourier transforms of distributions is \cite[Ch.~4]{Osgood}.
In the sense of distributions, one can show
\begin{equation}\label{eq:FourierMain}\qquad\qquad
\widecheck{\bigg( \frac{e^{ia t}}{t^n }\bigg)}(x)
= \frac{|x-a| (x-a)^{n-2} }{2i^n(n-1)!},\qquad \text{for $n\geq 2$}.
\end{equation}
This follows from \cite[Ex.~9, p.~340]{Folland} or \cite[Table A-6]{Kammler}
and the standard translation identity \cite[eq.~(9.29)]{Folland}.  The method of finite parts for treating
highly singular functions as distributions is discussed in \cite[p.~324-5]{Folland}.

\subsection{Characteristic functions}\label{Subsection:Characteristic}
The \emph{characteristic function} $\phi_{\nu}$ of a probability measure $\nu$ on $[0,1]$ 
is the Fourier transform of $\nu$:
\begin{align*}
\phi_{\nu}(z) 
&:= \widehat{\nu}(z)
= \int_0^1 e^{itz}\,d\nu(t) = \int_0^1 \sum_{p=0}^{\infty}\frac{(itz)^p}{p!}\,d\nu(t)\\
&= \sum_{p=0}^{\infty} \frac{(iz)^p}{p!} \int_0^1 t^p\,d\nu(t) 
= \sum_{p=0}^{\infty} m_p(\nu) \frac{(iz)^p}{p!}.
\end{align*}
The interchange of sum and integral is permissible because for each fixed $z\in \CC$ 
the series involved converges uniformly for $t\in [0,1]$.  Since $|m_p(\nu)| \leq 1$ for all 
$p \in \NN$, comparison with the exponential series ensures that the series above 
has an infinite radius of convergence and hence $\phi_{\nu}$ is an entire function.  Moreover,
\begin{equation}\label{eq:Bounded0}
|\phi_{\nu}(x)| = \bigg|\int_0^1 e^{itx}\,d\nu(t)\bigg| \leq \int_0^1d\nu(t) = 1
\end{equation}
for all $x \in \RR$ since $\nu$ is a probability measure on $[0,1]$.
If $\nu_1$ and $\nu_2$ are probability measures and 
$\phi_{\nu_1} = \phi_{\nu_2}$, then $\nu_1=\nu_2$ \cite[Thm.~26.2]{Billingsley}.

Under certain circumstances, we can recover a probability measure from its characteristic function \cite[p.347-8]{Billingsley}.

\begin{Lemma}[Inversion Theorem]\label{Lemma:Inversion}
If $\int_{\RR}|\phi_{\nu}(t)|\,dt$ is finite, then 
$F_{\nu} := \widecheck{\phi_{\nu}}$ is a bounded continuous function 
and $\nu(A) = \int_A F_{\nu}(t)\,dt$ for every Borel set $A$.
\end{Lemma}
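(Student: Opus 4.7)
The plan is to establish the three claims in turn: boundedness of $F_\nu$, continuity of $F_\nu$, and the density identity $\nu(A)=\int_A F_\nu(t)\,dt$. For the first two, I would argue directly from the hypothesis $\phi_\nu \in L^1(\RR)$. Since $|F_\nu(x)| \leq \frac{1}{2\pi}\int_\RR |\phi_\nu(t)|\,dt$ uniformly in $x$, boundedness is immediate. Continuity follows from dominated convergence: if $x_n\to x$, then $\phi_\nu(t)e^{-ix_n t}\to \phi_\nu(t)e^{-ixt}$ pointwise and is dominated in absolute value by the integrable function $|\phi_\nu(t)|$, so $F_\nu(x_n)\to F_\nu(x)$.

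The substantive step is the density identity. I would first establish it for intervals. Fubini applied to the definition of $F_\nu$ yields
\begin{equation*}
\int_a^b F_\nu(x)\,dx = \frac{1}{2\pi}\int_\RR \phi_\nu(t)\,\frac{e^{-iat}-e^{-ibt}}{it}\,dt,
\end{equation*}
with the swap justified by the dominating function $(b-a)|\phi_\nu(t)|$ on $[a,b]\times\RR$. The right-hand side is exactly the classical L\'evy inversion integral, and one must show it equals $\nu([a,b])$. The cleanest way I would carry this out is by a Gaussian mollification: let $g_\sigma$ denote the $N(0,\sigma^2)$ density and consider $\nu_\sigma := \nu \ast g_\sigma$, whose characteristic function is $\phi_\nu(t)e^{-\sigma^2 t^2/2}$, a Schwartz function. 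Standard Fourier inversion for Schwartz functions gives $\nu_\sigma$ a density equal to $\frac{1}{2\pi}\int_\RR \phi_\nu(t)e^{-\sigma^2 t^2/2}e^{-ixt}\,dt$. As $\sigma \to 0^+$, dominated convergence (with dominator $|\phi_\nu|$) shows this density converges pointwise to $F_\nu(x)$, while $\nu_\sigma \to \nu$ weakly. Passing to the limit in $\int_a^b$ for $a,b$ that are continuity points of the cumulative distribution of $\nu$ yields $\int_a^b F_\nu(x)\,dx = \nu([a,b])$, and since $F_\nu$ is continuous, the resulting measure $F_\nu\,dx$ is atomless, forcing $\nu$ to be atomless as well so that the identity holds for \emph{all} $a<b$.

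Finally, both $\nu$ and the Borel measure $\mu(A):=\int_A F_\nu(t)\,dt$ are finite Borel measures on $\RR$ that agree on the $\pi$-system of half-open intervals, which generates the Borel $\sigma$-algebra. The $\pi$-$\lambda$ theorem then delivers $\nu=\mu$ on all Borel sets, completing the proof.

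The main obstacle, and the part I would write with most care, is the rigorous passage from the characteristic function back to a density. Handling the oscillatory integrand $\frac{e^{-iat}-e^{-ibt}}{it}$ directly invites a principal-value argument; the Gaussian smoothing route circumvents this by reducing to the Schwartz-class inversion formula, where all manipulations (Fubini, Fourier inversion, exchange of limit and integral) are routinely valid. Everything else in the proof is bookkeeping via dominated convergence and the $\pi$-$\lambda$ theorem.
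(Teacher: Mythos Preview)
Your proof is correct and follows a standard route (Gaussian smoothing to reduce to Schwartz-class inversion, then dominated convergence and a $\pi$--$\lambda$ argument). However, the paper does not supply its own proof of this lemma: it is stated as a known result and attributed to Billingsley's \emph{Probability and Measure}, pp.~347--8. So there is no in-paper argument to compare against; your write-up essentially reconstructs the textbook proof that the paper is citing.
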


The following theorem  of L\'evy relates weak limits of probability
measures to pointwise convergence of the corresponding characteristic functions \cite[Thm.~26.3]{Billingsley}.

\begin{Lemma}[L\'evy's Continuity Theorem]\label{Lemma:Levy}
Let $\nu_n$ be probability measures on $[0,1]$ such that
$\phi_{\nu_n}$ converges pointwise on $\RR$ to a continuous function 
$\phi$.  Then 
\begin{enumerate}
\item $\phi = \phi_{\nu}$ for some probability measure $\nu$ on $[0,1]$;
\item $\nu_n \to \nu$ (weak convergence of measures);
\item $m_p(\nu) =  \lim_{p\to\infty} m_p(\nu_n)$ for all $p \in \NN$.
\end{enumerate}
\end{Lemma}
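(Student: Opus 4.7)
The plan is to exploit the compactness of $[0,1]$, which makes tightness automatic and lets one bypass the more delicate arguments needed for L\'evy's theorem on all of $\RR$. The overall strategy has three moves: extract a weakly convergent subsequence, identify its limit via the hypothesis on characteristic functions, and then upgrade convergence of every subsequence to convergence of the full sequence using uniqueness.

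First I would establish sequential compactness. Since each $\nu_n$ is supported on $[0,1]$, the cumulative distribution functions $F_n(t) = \nu_n([0,t])$ are monotone nondecreasing and uniformly bounded by $1$. Helly's selection theorem then produces a subsequence $F_{n_k}$ converging pointwise at all continuity points of some monotone limit $F$; because no mass can escape the compact interval, $F$ satisfies $F(0^-)=0$ and $F(1)=1$ and is therefore the distribution function of a probability measure $\nu^*$ on $[0,1]$. By the portmanteau characterization of weak convergence, $\nu_{n_k} \to \nu^*$.

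Next I would identify $\nu^*$ via its characteristic function. For each fixed $z\in\RR$, the function $t \mapsto e^{itz}$ is continuous and bounded on $[0,1]$, so criterion (a) of Subsection \ref{Subsection:MM} yields
\begin{equation*}
\phi_{\nu^*}(z) = \lim_{k\to\infty}\phi_{\nu_{n_k}}(z) = \phi(z),
\end{equation*}
the last equality being the hypothesis that $\phi_{\nu_n}\to\phi$ pointwise on $\RR$. This proves (a). Because characteristic functions determine probability measures uniquely (as noted in the text just before Lemma \ref{Lemma:Inversion}), every subsequential weak limit of $\{\nu_n\}$ equals this same $\nu := \nu^*$. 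A standard ``every subsequence has a further subsequence converging to the same limit'' argument then promotes this to weak convergence of the full sequence $\nu_n \to \nu$, giving (b). Part (c) is immediate from (b), applying criterion (a) of Subsection \ref{Subsection:MM} once more with the continuous function $g(t) = t^p$ on $[0,1]$.

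The main obstacle, though routine on a compact interval, is the sequential weak compactness step. On $\RR$ it requires Prokhorov's theorem together with a separate tightness hypothesis (usually deduced from continuity of $\phi$ at $0$), whereas on $[0,1]$ tightness is automatic and Helly's theorem supplies the selection directly. In fact the hypothesis that $\phi$ be continuous on $\RR$ is redundant in this compactly supported setting: in the conclusion $\phi = \phi_\nu$ is automatically entire (as observed in Subsection \ref{Subsection:Characteristic}), and the argument sketched above nowhere uses continuity of the pointwise limit. Stating the hypothesis nevertheless matches the general form imported from \cite{Billingsley}.
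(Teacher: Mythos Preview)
The paper does not actually prove this lemma; it merely states it and cites \cite[Thm.~26.3]{Billingsley}. Your proposal supplies a correct, self-contained argument tailored to the compactly supported setting, and this is genuinely more than what the paper offers.

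Your route---Helly's selection theorem to extract a subsequential weak limit, identification of that limit via the hypothesis $\phi_{\nu_{n_k}} \to \phi$ applied to the bounded continuous functions $t \mapsto e^{itz}$, uniqueness of characteristic functions to pin down the limit, and the subsubsequence principle to upgrade to full-sequence convergence---is the standard skeleton of the proof of L\'evy's theorem, with the simplification that tightness is automatic on $[0,1]$. Your closing remark that continuity of $\phi$ is redundant here is also correct and worth noting: in the general version on $\RR$, continuity of $\phi$ at $0$ is precisely what rules out mass escaping to infinity, a phenomenon that cannot occur when all measures are supported on a fixed compact interval.
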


\subsection{A power series lemma}\label{Subsection:Power}
We ultimately plan to consider a sequence of probability measures $\nu_n$ 
to which we will apply Lemma \ref{Lemma:Levy}.  To show that the associated
sequence of characteristic functions converges we need the following lemma.

\begin{Lemma}\label{Lemma:PowerSeries}
Suppose that $|a_p(n)| \leq 1$ for all $n,p \in \NN$.
If $\lim_{n\to\infty}a_p(n) = a_p$ for each $p$, then 
$\sum_{p=0}^{\infty} a_p(n) \frac{z^p}{p!}$
converges locally uniformly on $\CC$ to $\sum_{p=0}^{\infty} a_p \frac{z^p}{p!}$
\end{Lemma}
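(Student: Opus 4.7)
The plan is to prove uniform convergence on each closed disk $\overline{D}(0,R) \subset \CC$, which is exactly local uniform convergence. The argument is a standard tails-and-head splitting supported by a uniform majorant obtained from the hypothesis $|a_p(n)| \le 1$.

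First I would fix $R > 0$ and $\epsilon > 0$, and estimate, for every $z$ with $|z| \le R$,
\begin{equation*}
\left| \sum_{p=0}^{\infty} a_p(n) \frac{z^p}{p!} - \sum_{p=0}^{\infty} a_p \frac{z^p}{p!} \right|
\;\le\; \sum_{p=0}^{\infty} |a_p(n) - a_p| \, \frac{R^p}{p!}.
\end{equation*}
Since $|a_p(n)| \le 1$ and pointwise limits preserve this bound, $|a_p| \le 1$, so $|a_p(n) - a_p| \le 2$. Hence the individual terms of this series are dominated by $2R^p/p!$, which is summable to $2e^R$.

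Next I would exploit the rapid decay of $R^p/p!$ to control the tail independently of $n$: choose $N = N(R,\epsilon)$ with $\sum_{p > N} 2R^p/p! < \epsilon/2$. For the finite head $0 \le p \le N$, the pointwise convergence $a_p(n) \to a_p$ gives an index $n_0 = n_0(R,\epsilon, N)$ such that $|a_p(n) - a_p| < \epsilon/(2e^R)$ for all $p \le N$ and all $n \ge n_0$; this bounds the head by $(\epsilon/(2e^R))\sum_{p=0}^N R^p/p! \le \epsilon/2$. Adding the two contributions gives $\epsilon$, uniformly in $z$ with $|z| \le R$, for all $n \ge n_0$. Since $R$ was arbitrary, the convergence is locally uniform on $\CC$.

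There is no real obstacle here; the only thing to double-check is that the same $N$ and $n_0$ work for every $z$ in the disk, which is immediate because the estimate is carried out after replacing $|z|$ by the uniform upper bound $R$. In spirit this is the Weierstrass $M$-test combined with dominated convergence on the counting measure on $\NN$, but because the summand $2R^p/p!$ is explicit and uniform in $n$, we do not need to invoke any measure-theoretic machinery.
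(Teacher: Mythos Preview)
Your proof is correct and is essentially the same head/tail splitting argument as the paper's: both fix $R>0$, use the uniform bound $|a_p(n)-a_p|\le 2$ to control the tail $\sum_{p>N}$ independently of $n$, and then use pointwise convergence on the finitely many terms $p\le N$. The only difference is bookkeeping (you bound the head via $\epsilon/(2e^R)$ times $\sum R^p/p!\le e^R$, whereas the paper bounds each of the $N$ head terms by $\epsilon/(2N)$), which is immaterial.
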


\begin{proof}
Fix $R >0$.  Let $N\in \NN$ be so large that
\begin{equation*}
\sum_{p=N}^{\infty} \frac{R^p}{p!} < \frac{\epsilon}{4}.
\end{equation*}
Let $M \in \NN$ be such that
\begin{equation*}\qquad
n \geq M \quad \implies \quad |a_p(n) - a_p| \frac{R^p}{p!} < \frac{\epsilon}{2N}
\quad \text{for $0 \leq p \leq N-1$}.
\end{equation*}
If $|z| \leq R$, then
\begin{align*}
\left| \sum_{p=0}^{\infty} a_p(n) \frac{z^p}{p!} - \sum_{p=0}^{\infty} a_p \frac{z^p}{p!}\right|
&\leq \sum_{p=0}^{N-1} |a_p(n)-a_p| \frac{|z|^p}{p!}+ \sum_{p=N}^{\infty} |a_p(n)-a_p| \frac{|z|^p}{p!} \\
&\leq \sum_{p=0}^{N-1} |a_p(n)-a_p| \frac{R^p}{p!}+ 2 \sum_{p=N}^{\infty}  \frac{R^p}{p!} \\
&< \frac{\epsilon}{2} + \frac{\epsilon}{2} \\
&= \epsilon.
\end{align*}
Thus, the convergence is uniform on $|z|\leq R$.  Since $R>0$ was arbitrary, the 
convergence is locally uniform on $\CC$. 
\end{proof}

\subsection{The measures $\nu_n$}\label{Subsection:Nun}
Fix $S = \<n_1,n_2,\ldots,n_k\>$.
Consider the probability measures 
\begin{equation}\label{eq:nun}
\nu_n = \frac{1}{|\Le\multi{n}|}\sum_{\ell \in \Le\multi{n}} \delta_{\ell /n}
\end{equation}
for $n \in \NN$, wherein $\delta_x$ denotes the point mass at $x$.  Since
\begin{equation}\label{eq:MinMax}
 \frac{1}{n_k} \leq \underset{\ell \in \Le\multi{n}}{\operatorname{min}}\frac{ \ell}{n} 
\quad \text{and} \quad
\underset{\ell \in \Le\multi{n}}{\operatorname{max}}\frac{ \ell}{n} \leq \frac{1}{n_1},
\end{equation}
the support of each $\nu_n$ is contained in $[\frac{1}{n_k},\frac{1}{n_1}] \subset [0,1]$.
The $p$th moment of $\nu_n$ is
\begin{equation*}
m_p(\nu_n)
= \int_0^1 t^p \,d\nu_n(t)
= \frac{1}{\Le\multi{n}}\sum_{\ell \in \Le\multi{n}}\left(\frac{\ell}{n}\right)^p .
\end{equation*}
Theorem \ref{Theorem:Moment}, \eqref{eq:Znk}, and the preceding imply that
\begin{equation*}
\lim_{n\to\infty} m_p(\nu_n) = \binom{p+k-1}{p}^{-1} h_p\left( \frac{1}{n_1}, \frac{1}{n_2} ,\ldots, \frac{1}{n_k} \right).
\end{equation*}

\subsection{The function $\phi$}\label{Subsection:Phi}
For notational simplicity we let $x_i = 1/n_i$ for $i\in [k]$.
The bound \eqref{eq:UniformBound}
and Lemma \ref{Lemma:PowerSeries} ensure that 
the characteristic functions $\phi_{\nu_n}$ converge locally uniformly
(and hence pointwise) on $\CC$ to
\begin{align}
\phi(z) 
&=\sum_{p=0}^{\infty} \binom{p+k-1}{p}^{-1} h_p(x_1,x_2,\ldots,x_k)\frac{(iz)^p}{p!}  \label{eq:phiPower} \\
&=(k-1)!\sum_{p=0}^{\infty} \frac{h_p(x_1,x_2,\ldots,x_k)}{(p+k-1)!} (iz)^p \nonumber \\
&= (k-1)!\sum_{r=1}^k \frac{e^{ix_r z}}{(iz)^{k-1}\prod_{j\neq i}(x_r-x_j)} ,  \label{eq:phiExp}
\end{align}
in which the final equality is Theorem \ref{Theorem:Combo}.
A glance at \eqref{eq:phiPower}, or Theorem \ref{Theorem:Combo} itself,
tells us that the apparent singularity in \eqref{eq:phiExp} at $z=0$ is removable.
In particular, $\phi$ is an entire function and $\phi(0)=1$.

Lemma \ref{Lemma:Levy} (L\'evy's Continuity theorem) provides a probability 
measure $\nu$ such that $\nu_n\to\nu$ and $\phi = \phi_{\nu}$.
From \eqref{eq:Bounded0} we see that $\phi$ is bounded on $\RR$.  Moreover,
\begin{equation*}
\int_{1}^{\infty} \left| \frac{e^{ix_rt}}{t^{k-1}} \right|\,dt
+\int_{-\infty}^{-1} \left| \frac{e^{ix_rt}}{t^{k-1}} \right|\,dt 
\,\leq\, 2\int_1^{\infty}\frac{dt}{t^2} = 2
\end{equation*}
and hence \eqref{eq:phiExp} implies that 
$\int_{\RR} |\phi(t)|\,dt$ is finite.  Lemma \ref{Lemma:Inversion} implies that
$F_{\nu} = \widecheck{\phi_{\nu}}$ is a bounded continuous function such that
\begin{equation}\label{eq:nuAF}
\nu(A) = \int_A F_{\nu}(t)\,dt
\end{equation}
for all Borel sets $A \subseteq [0,1]$.  In what follows, we let $F:= F_{\nu}$.

\subsection{Completion of the proof}\label{Subsection:Completing}
We maintain the convention that $x_i = 1/n_i$ for $i\in [k]$.
From the preceding discussion we have
\begin{align}
F(x)
&= \widecheck{\phi_{\nu}}(x) \nonumber\\
&= \bigg((k-1)!\sum_{r=1}^k \frac{e^{ix_r z}}{(iz)^{k-1}\prod_{j\neq r}(x_r-x_j)}  \bigg)^{\!\!\widecheck{\quad}}\!\!\!\!(x) && \text{(by \eqref{eq:phiExp})} \label{eq:Backward}\\
&=\frac{(k-1)!}{i^{k-1}}\sum_{r=1}^k \frac{1}{\prod_{j\neq r}(x_r-x_j)} 
\widecheck{\bigg( \frac{e^{ix_r z}}{z^{k-1} }\bigg)}(x) \nonumber \\
&=\frac{(k-1)!}{2(i^{k-1})^2(k-2)!}
\sum_{r=1}^k \frac{|x-x_r|(x-x_r)^{k-3}}{\prod_{j\neq r}(x_r-x_j)}    && \text{(by \eqref{eq:FourierMain})}\nonumber\\
 &=  \frac{k-1}{2}\sum_{r=1}^k\frac{|x_r-x|(x_r-x)^{k-3}}{\prod_{j\neq r}(x_r-x_j)} \label{eq:MidT} \\
&= \frac{k-1}{2}\sum_{r=1}^k \frac{|\frac{1}{n_r}-x|(\frac{1}{n_r}-x)^{k-3}}{\prod_{j\neq r}(\frac{1}{n_r}-\frac{1}{n_j})}   \nonumber\\
&=\frac{(k-1)n_1n_2\cdots n_k}{2}\sum_{r=1}^k \frac{|1-n_rx|(1-n_r x)^{k-3}}{\prod_{j\neq r}(n_j-n_r)}  .
\label{eq:FinalT}
\end{align}

For $k\geq 3$, induction and the definition of the derivative confirms that $|x|x^{k-3}$ is 
$k-3$ times continuously differentiable on $\RR$, but is not differentiable $k-2$ times at $x=0$.  
Since the $n_1,n_2,\ldots,n_k$ are distinct
and because the zeros of $1-n_r x$ belong to $[0,1]$, we conclude that $F$
is $k-3$ times continuously differentiable on $[0,1]$, but not differentiable $k-2$ times there.

Let $[\alpha,\beta] \subseteq [0,1]$.  Observe that $\partial[\alpha,\beta]=\{\alpha,\beta\}$, so \eqref{eq:nuAF} implies
\begin{equation*}
\nu(\{\alpha,\beta\}) = \nu(\{\alpha\}) + \nu(\{\beta\})
= \int_{\alpha}^{\alpha} F(t)\,dt + \int_{\beta}^{\beta} F(t)\,dt = 0.
\end{equation*}
Characterization (c) of
the weak convergence $\nu_n \to \nu$ (Subsection \ref{Subsection:MM}) implies
\begin{align*}
\lim_{n\to\infty} \frac{ |\{ \ell : \Le\multi{n} : \ell \in[\alpha n, \beta n]\} |}{ \Le\multi{n}}
&= \lim_{n\to\infty} \frac{ |\{ \ell : \Le\multi{n} : \frac{\ell}{n} \in[\alpha , \beta ]\}| }{ \Le\multi{n}} \\
&= \lim_{n\to\infty} \nu_n\big( [\alpha,\beta]\big) && \text{by \eqref{eq:nun}}\\
&= \nu\big([\alpha,\beta]\big) \\
&= \int_{\alpha}^{\beta} F(t)\,dt.
\end{align*}

Now observe that $F$ is supported on $[1/n_k,1/n_1]$ since \eqref{eq:MinMax} ensures that
\begin{equation*}
\nu([a,b]) = \lim_{n\to\infty} \nu_n([a,b]) = 0
\end{equation*}
for any interval $[a,b]$ that does not intersect $[1/n_k,1/n_1]$.
Consequently, characterization (a) of
the weak convergence $\nu_n \to \nu$ and Lemma \ref{Lemma:Inversion} yield
\begin{align*}
\lim_{n\to\infty}  \frac{1}{ \Le\multi{n}} \sum_{\ell \in \Le\multi{n}} g(\ell/n) 
&= \lim_{n\to\infty} \int_0^1 g(t) \,d\nu_n(t) \\
&= \int_0^1 g(t) \,d\nu(t) \\
&= \int_0^1 g(t) F(t)\,dt
\end{align*}
for any continuous function $g:(0,1)\to\CC$ (since $F$ is supported on $[1/n_k,1/n_1] \subset (0,1)$, the values of $g$
outside of this interval are irrelevant).  This concludes the proof of Theorem \ref{Theorem:Main}. \qed

\section{Concluding remarks}\label{Section:End}
Theorem \ref{Theorem:Main} (which depends upon Theorems \ref{Theorem:Moment} and \ref{Theorem:Combo})
appears to answer all questions about the asymptotic behavior of factorization length multisets in numerical semigroups.
However, there are a few issues it does not immediately address which suggest several avenues for further exploration.

Although the length distribution function $F$ provided by Theorem \ref{Theorem:Main} is explicit,
it is no longer amenable to symbolic computation when a semigroup has a large number of generators.  
That is, one typically does not expect closed-form answers in terms of $n_1,n_2,\ldots,n_k$.

We have shown analytically 
that $F$ is unimodal for $k=3,4$.   Numerical evidence suggests this remains true for $k \geq 5$,
although we do not have a general proof.  This would lead to a better understanding of the asymptotic relationship between the generators and the mode of the factorization lengths.  Finally, the numerical examples in Section \ref{Section:Applications} suggest relatively rapid convergence in parts~(a) and ~(c) of Theorem \ref{Theorem:Main}.
It would be of some interest to prove this analytically.  However, the techniques involved in the proof of these theorems do not
appear to readily admit quantitative estimates.

\bibliography{FLD4AS2-Probability}
\bibliographystyle{amsplain}

\end{document}